\newtheorem{theorem}{Theorem}[section]
\newtheorem{lemma}[theorem]{Lemma}
\newtheorem{corollary}[theorem]{Corollary}
\newtheorem{proposition}[theorem]{Proposition}
\newtheorem{definition}[theorem]{Definition}
\def\ds{\displaystyle}
\def\dom{\mathop{\rm dom}}
\newcommand{\astorig}{(\ast)}
\newcommand{\astone}{(\ast_1)}
\newcommand{\astwo}{(\ast_2)}
\newcommand{\astonehat}{\left(\widehat{\ast_1}\right)}
\newcommand{\R}{\mathbb{R}}
\newcommand{\N}{\mathbb{N}}
\newcommand{\eps}{{\varepsilon}}
\newcommand{\be}{\begin{equation}}
\newcommand{\ee}{\end{equation}}
\renewcommand{\emptyset}{\varnothing}
\begin{document}

%\articletype{ARTICLE TEMPLATE}% Specify the article type or omit as appropriate

\title{On Long Orbit Empty Value (LOEV) principle}

\author{M. Ivanov, D. Kamburova, N. Zlateva}
\date{}

\maketitle

\begin{abstract}
We consider an useful in Variational Analysis tool -- Long Orbit or Empty Value (LOEV) principle -- in different settings, starting from more abstract to more defined.

We prove, using LOEV principle, a number of basic results in Variational Analysis, including some novel. We characterize $\Sigma_g$-semicompleteness for a generalized metric function $g$ which is neither symmetric nor satisfies the triangle inequality, in terms of validity of Ekeland Theorem for this $g$. We present an interesting application to perturbability to minimum in a $G_\delta$ subset of a complete metric space.
\end{abstract}

\section{Introduction}\label{sec:intro}

In \cite{Ivanov-Zlateva} it is postulated that the set-valued map $S: S \rightrightarrows S$, where $(X,d)$ is a complete metric space, satisfies the condition $(\ast)$, if $x \notin S(x),$ for all $x \in X$, and whenever $y \in S(x)$ and $x_n \to x$, as $n\to\infty$, there are infinitely many $x_n$'s such that
$y \in S (x_n)$.

It is shown that if $S$ satisfies $(\ast)$, then there is an $S$-orbit of infinite length, or there is  $x\in X$ such that $S(x)=\emptyset$, so, \emph{Long Orbit or Empty Value} (LOEV).

Further, the article \cite{Ivanov-Zlateva} shows that many set-valued maps naturally arising in Variational Analysis, satisfy $(\ast)$ and, therefore, many of the basic theorems of Variational Analysis can be proved using LOEV principle.

At present Ekeland Theorem is considered the basic one and all others are derived from it. Note that essentially Ekeland Theorem is an optimization result, while LOEV principle is inherently dynamical and can be applied where Ekeland Theorem is hard to apply. The origin of LOEV principle is in discretization of differential inclusions, see~\cite{iz-cled}.

Our aim here, however is somewhat different. To put it in context, LOEV principle can be considered as a non-idempotent counterpart to the famous Br\'ezis-Browder Principle, \cite{br}, see \cite{pbr} for a recent account of the huge influence of Br\'ezis-Browder Principle.

However, the non-idempotent version that can be stated at the same level of generality as Br\'ezis-Browder Principle, which is essentially a set-theoretic result, is rather cumbersome, so we skip it. Instead, in Section~\ref{sec:topology} we  present a general version of LOEV principle in a topological framework. It might not be the most general possible, but it extends the framework of \cite{Ivanov-Zlateva} considerably, including, for example, metric spaces that need not be complete.

The advantage of not requiring $S$ to be idempotent, as to generate partial ordering, is demonstrated in the most original Section~\ref{sec:semicomplete}, where we are able to derive, using LOEV principle, premetric versions of some of the most fundamental results of the Variational Analysis like the Theorems of Ekeland, Caristi and Takahashi. We also show that the validity of each of these characterizes the $\Sigma_g$-semicompleteness of the space. This section is inspired by \cite{Suzuki-2018}, but our approach through LOEV principle is original. Note also that no variant of Ekeland Theorem is formulated in \cite{Suzuki-2018}, so Theorem~\ref{th:Ekeland} is original.

Continuing with respecting the LOEV principle from different sides, from more general to more concrete settings, in Section~\ref{sec:cmp} we show how LOEV principle can be specified to work in complete metric space and we prove,  using this specification,  that is Theorem~\ref{thm:loev-mon}, the full Ekeland Theorem, as well as -- in order to demonstrate the technique -- two more complex theorems: those of Oettli and Th\'era, and Fabi\'an and Preiss.

Finally, in Section~\ref{sec:gd} we present a nontrivial application of the premetric version of Ekeland Theorem to perturbation within a $G_\delta$ subset of a complete metric space.

\section{LOEV principle in a topological framework}\label{sec:topology}

First, we set the formal framework for the present study.

We work in a first countable topological space, usually denoted by $(X,\tau)$. We consider a set-valued map $S:X\rightrightarrows X$ with domain 
$$
    \dom S := \{x\in X:\ S(x) \neq \emptyset\}.
$$
Recall that a (finite or infinite) succession of points in $X$ satisfying $x_{i+1}\in S(x_i)$ for $i=0,1,2,\dots $ is called an orbit of $S$, or $S$-orbit, starting at $x_0$. We say that an $S$-orbit ends at $ x\in X$ if the orbit is finite and $x_n= x$, or the orbit is infinite and the sequence $\{x_i\}_{i=0}^\infty$ converges to $ x$. If an $S$-orbit is infinite and the sequence $\{x_i\}_{i=0}^\infty$ diverges, we say that it is a divergent $S$-orbit.

For convenience, we impose \emph{non-stationary} condition:
\begin{equation}
    \label{eq:non-st}
    x\not\in S(x),\quad\forall x\in X.
\end{equation}
The meaning of this condition is that the stationary sequence $x,x,x,\ldots$ is never an $S$-orbit. Obviously, the latter can be easily ensured when necessary, by considering instead of $S$ the map $x\rightrightarrows S(x)\setminus \{x\}$.

Recall, see \cite{Ivanov-Zlateva}, that $S$ satisfies property $\astorig$ if \eqref{eq:non-st} is fulfilled, and for each $y\in S(x)$ and each sequence $x_i\to x$, there is a subsequence $\{x_{i_k}\}_{k=1}^\infty$ such that $y \in S(x_{i_k})$ for all $k\in\mathbb{N}$.

The property $\astorig$ is very practical, as it can be seen from the examples in \cite{Ivanov-Zlateva}, but it can be easily relaxed in several directions, most obvious of which is to consider only sequences starting from a certain fixed point.

\begin{definition}
    \label{def:ast-1}
    The set-valued map $S:X\rightrightarrows X$, where $(X,\tau)$ is a first countable topological space, satisfies the property $\astone$ at $x_0\in X$, if  \eqref{eq:non-st} is fulfilled, and for each infinite $S$-orbit $\{x_i\}_{i=0}^\infty$ starting at $x_0$ and ending at $x$ (that is, $x_i\to x$) and each $y\in S(x)$, there is a subsequence $\{x_{i_k}\}_{k=1}^\infty$ such that
    $$
        y \in S(x_{i_k}),\quad\forall k\in\mathbb{N}.
    $$
\end{definition}

Next, we define what we require from the spaces we work on.

\begin{definition}
    \label{def:h-space}
    We will call the triple $(X,\tau,h)$ an $h$-\emph{space}, if $(X,\tau)$ is a first countable Hausdorff space and the generalized distance function $h:X\times X\to \R^+$ is such that

    (i) $h(x,y)=0 \Leftrightarrow  x=y$;

   (ii) if $h(x,x_n)\to 0$, then $x_n\to x$;

   (iii) if $x_n\to x$, then $h(x_{n+1},x_{n})\to 0$.

\noindent (Of course, all convergences are as $n\to\infty$.)
\end{definition}

What follows is our first main result, which is a far-reaching generalization of LOEV principle.
\begin{theorem}
    \label{thm:loev-topo}
Let $(X,\tau,h)$ be an h-space. Let $S:X\rightrightarrows X$ be a set-valued map satisfying $\astone$ at $x_0$. Then at least one of (a) and (b) below is true:

(a) There is a divergent $S$-orbit starting at $x_0$;

(b) There is an $S$-orbit starting at $x_0$ and ending at $  x\in X$ such that $S( {x})=\varnothing$.
\end{theorem}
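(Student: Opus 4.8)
The plan is to manufacture, by a greedy induction, a single $S$-orbit issuing from $x_0$ whose consecutive jumps are kept as long as possible, and then to check that this one orbit already witnesses (a) or (b). Starting from $x_0$, suppose an initial segment $x_0,\dots,x_i$ has been produced. If $S(x_i)=\emptyset$ I stop: the finite orbit $x_0,\dots,x_i$ ends at $x_i$, which has empty value, so (b) holds. Otherwise I put $r_i:=\sup\{h(z,x_i):z\in S(x_i)\}$ and pick $x_{i+1}\in S(x_i)$ with $h(x_{i+1},x_i)\geq\tfrac12\min\{1,r_i\}$; such a point exists because the right-hand side is strictly below $r_i$, and $r_i>0$ since every $z\in S(x_i)$ differs from $x_i$ by \eqref{eq:non-st}, whence $h(z,x_i)>0$ by axiom~(i) (the truncation by $1$ merely accommodates $r_i=+\infty$). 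The decisive modelling choice is to measure a step by $h(x_{i+1},x_i)$ and not by $h(x_i,x_{i+1})$: since $h$ is not assumed symmetric, $h(x_{i+1},x_i)$ is precisely the quantity governed by axiom~(iii). Now there are three mutually exclusive outcomes. If the construction terminates we are in case~(b). If it yields an infinite orbit that diverges we are in case~(a). Otherwise the infinite orbit converges, necessarily to a single point $x$ because $(X,\tau)$ is Hausdorff, i.e. it ends at $x$; it then remains to show $S(x)=\emptyset$, which once more puts us in case~(b), and this is the crux.

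To prove the claim I would argue by contradiction: suppose $y\in S(x)$; then $y\neq x$ by \eqref{eq:non-st}. Feeding the infinite orbit $\{x_i\}_{i=0}^\infty$ (which starts at $x_0$ and ends at $x$) into property $\astone$ at $x_0$ produces a subsequence $\{x_{i_k}\}$ with $y\in S(x_{i_k})$ for every $k$. Since $x_{i_k}\to x\neq y$ and the space is Hausdorff, axiom~(ii) forbids $h(y,x_{i_k})\to 0$, so after a further extraction $h(y,x_{i_k})$ stays bounded below by some $\delta>0$. Then $r_{i_k}\geq h(y,x_{i_k})\geq\delta$, so the selection rule forces $h(x_{i_k+1},x_{i_k})\geq\tfrac12\min\{1,r_{i_k}\}$ to remain bounded below by a positive constant. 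But the whole orbit converges to $x$, so axiom~(iii) gives $h(x_{i+1},x_i)\to 0$, hence $h(x_{i_k+1},x_{i_k})\to 0$ --- the desired contradiction. Therefore $S(x)=\emptyset$.

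I expect the main obstacle to be keeping the asymmetry of $h$ under control: the greedy rule must be arranged so that one and the same orientation of $h$ is the one seen by axiom~(ii), which keeps $h(y,x_{i_k})$ away from $0$, and by axiom~(iii), which drives $h(x_{i_k+1},x_{i_k})$ to $0$; measuring progress with the opposite orientation leaves the final contradiction unreachable. The positivity of $r_i$, the harmless role of the $\min\{1,\cdot\}$ truncation, the extraction of subsequences, and the uniqueness of sequential limits in a Hausdorff space are all routine. First countability, part of the standing framework of Definitions~\ref{def:ast-1} and~\ref{def:h-space}, is not otherwise needed in this argument.
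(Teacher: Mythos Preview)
Your argument is correct and follows essentially the same route as the paper: the greedy construction is exactly Lemma~\ref{lem:loev-induction}, and your contradiction in the convergent case is the content of Lemma~\ref{lem:to0-empty} combined with axiom~(iii), just run in contrapositive form rather than via the intermediate statement $\sup h(S(x_i),x_i)\to 0$. The only difference is organizational---the paper factors the proof into two lemmas while you inline everything---and your remarks on the orientation of $h$ and the role of first countability are accurate side observations.
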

Before giving the proof of Theorem~\ref{thm:loev-topo} we prove the following results.
\begin{lemma}
    \label{lem:loev-induction}
    Let $X$ be a set. Let $S:X\rightrightarrows X$ be a set-valued map satisfying \eqref{eq:non-st}. Let $h:X\times X\to\mathbb{R}^+$ satisfy    \emph{Definition~\ref{def:h-space}(i)}. Let $x_0\in X$.

    Then at least one of the following two below holds:

        $\bullet$ There is a finite $S$-orbit $x_0,x_1,\ldots,x_n$ such that $S(x_n) = \emptyset$;
        
        $\bullet$ There is an infinite $S$-orbit $\{x_i\}_{i\ge0}$ such that
\begin{equation}\label{eq:x_i_i+1_jump}
            h(x_{i+1},x_i) > \min\{1,\sup h\left(S(x_i),x_i\right)\}/2.
        \end{equation}
\end{lemma}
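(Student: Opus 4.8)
The plan is to build the $S$-orbit greedily by recursion on $i$, starting from $x_0$: at each stage we either stop, because $S(x_i)=\emptyset$, or we choose $x_{i+1}\in S(x_i)$ so that the jump $h(x_{i+1},x_i)$ beats the threshold appearing in \eqref{eq:x_i_i+1_jump}.

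The single fact that makes the recursion go through is this: \emph{if $S(x)\neq\emptyset$, then $\min\{1,\sup h(S(x),x)\}/2$ is a strictly positive real number that is strictly smaller than $\sup h(S(x),x)$.} So first I would establish it. Pick any $y\in S(x)$; the non-stationary condition \eqref{eq:non-st} forces $y\neq x$, hence $h(y,x)>0$ by Definition~\ref{def:h-space}(i), so that $s:=\sup h(S(x),x)>0$. A short case check --- $s\le 1$, $1<s<\infty$, or $s=+\infty$ --- shows $\min\{1,s\}/2<s$ in each case. Then, by the very definition of supremum, there exists $x'\in S(x)$ with $h(x',x)>\min\{1,s\}/2$.

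Next I would run the recursion (an instance of the axiom of dependent choice). Suppose a finite $S$-orbit $x_0,\dots,x_i$ has been produced with $x_{j+1}\in S(x_j)$ for $j<i$. If $S(x_i)=\emptyset$, we stop and $x_0,\dots,x_i$ witnesses the first alternative (possibly already at $i=0$). Otherwise the previous paragraph supplies $x_{i+1}\in S(x_i)$ with $h(x_{i+1},x_i)>\min\{1,\sup h(S(x_i),x_i)\}/2$, which is exactly \eqref{eq:x_i_i+1_jump} at index $i$, and we continue. Either the recursion halts at some stage $n$ --- the first alternative --- or it never halts and yields an infinite $S$-orbit $\{x_i\}_{i\ge0}$ satisfying \eqref{eq:x_i_i+1_jump} for every $i$ --- the second alternative.

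I do not expect a serious obstacle here; the whole content is concentrated in the strict inequality $\min\{1,\sup h(S(x),x)\}/2<\sup h(S(x),x)$, whose only mildly delicate points are excluding the case $\sup h(S(x),x)=0$ (ruled out by \eqref{eq:non-st} together with Definition~\ref{def:h-space}(i)) and tolerating the case $\sup h(S(x),x)=+\infty$ (harmless, since the threshold is then just $1/2$). Note that only Definition~\ref{def:h-space}(i) is needed: the topology $\tau$ and conditions (ii)--(iii) play no role in this lemma, consistent with its hypotheses.
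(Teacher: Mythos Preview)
Your proposal is correct and follows exactly the same greedy recursive construction as the paper's proof: at each stage either $S(x_i)=\emptyset$ and you stop, or \eqref{eq:non-st} together with Definition~\ref{def:h-space}(i) gives $\sup h(S(x_i),x_i)>0$, allowing the choice of $x_{i+1}\in S(x_i)$ satisfying \eqref{eq:x_i_i+1_jump}. Your write-up is simply more explicit about the strict inequality $\min\{1,s\}/2<s$ and the appeal to dependent choice, which the paper leaves implicit.
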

\begin{proof}
    We can construct the desired finite or infinite $S$-orbit by the following procedure: if $x_0,x_1,\dots,x_i$ are already chosen, then

$\bullet$ If  $S(x_i)=\varnothing$  we are done.

$\bullet$  Otherwise, because of \eqref{eq:non-st} and (i),
$$
    \sup h(S(x_i),x_i) > 0.
$$
Choose a $x_{i+1}\in S(x_i)$ such that \eqref{eq:x_i_i+1_jump} holds.
\end{proof}
\begin{lemma}
    \label{lem:to0-empty}
    Let $(X,\tau,h)$ be an $h$-space. Let $S:X\rightrightarrows X$ be a set-valued map satisfying $\astone$ at $x_0$. Let $\{x_i\}_{i=0}^\infty$ be a convergent $S$-orbit: $x_{i+1}\in S(x_i)$ for all $i$, and $x_i\to\bar x$, as $i\to\infty$. If
    \begin{equation}
        \label{eq:sup-diam-0}
        \lim_{i\to\infty} \sup h\left(S(x_i),x_i\right) = 0,
    \end{equation}
    then $S(\bar x) = \emptyset$.
\end{lemma}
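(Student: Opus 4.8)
The plan is to argue by contradiction: suppose $S(\bar x)\neq\emptyset$ and fix a point $y\in S(\bar x)$. The goal is to show that such a $y$ must coincide with $\bar x$, which contradicts the non-stationary condition \eqref{eq:non-st}.

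First I would apply the property $\astone$. The orbit $\{x_i\}_{i=0}^\infty$ is infinite, starts at $x_0$, and converges to $\bar x$; since $y\in S(\bar x)$, Definition~\ref{def:ast-1} yields a subsequence $\{x_{i_k}\}_{k=1}^\infty$ with $y\in S(x_{i_k})$ for every $k$. Next, I would combine this membership with the vanishing-supremum hypothesis: for each $k$ we have
$$
    h(y,x_{i_k}) \le \sup h\!\left(S(x_{i_k}),x_{i_k}\right),
$$
and the right-hand side tends to $0$ as $k\to\infty$, being a subsequence of the sequence in \eqref{eq:sup-diam-0}. Hence $h(y,x_{i_k})\to 0$.

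Now I would invoke the axioms of an $h$-space. By Definition~\ref{def:h-space}(ii), $h(y,x_{i_k})\to 0$ forces $x_{i_k}\to y$ in $\tau$. But $\{x_{i_k}\}$ is a subsequence of $\{x_i\}$, which converges to $\bar x$, so $x_{i_k}\to \bar x$ as well. Since $(X,\tau)$ is Hausdorff, limits are unique, so $y=\bar x$. This means $\bar x\in S(\bar x)$, contradicting \eqref{eq:non-st}. Therefore $S(\bar x)=\emptyset$.

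There is no real obstacle here; the only care needed is bookkeeping — checking that $\astone$ is genuinely applicable (the orbit is infinite, issues from $x_0$, and converges to $\bar x$), that the bound $h(y,x_{i_k})\le\sup h(S(x_{i_k}),x_{i_k})$ uses the correct slot ordering of $h$, and that properties (ii) and the Hausdorff separation are deployed in the right sequence. Note that properties (i) and (iii) of the $h$-space are not needed for this lemma.
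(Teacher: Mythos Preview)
Your proof is correct and follows essentially the same route as the paper: assume $y\in S(\bar x)$, use $\astone$ to get $y\in S(x_{i_k})$, combine with \eqref{eq:sup-diam-0} and Definition~\ref{def:h-space}(ii) to force $x_{i_k}\to y$, and conclude $y=\bar x$ by Hausdorffness, contradicting \eqref{eq:non-st}. Your remark that properties (i) and (iii) are not invoked here is accurate.
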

\begin{proof}
    Assume the contrary, that is, there is $y\in S(\bar x)$. From $\astone$ it follows that there is a subsequence $\{x_{i_k}\}_{k=1}^\infty$ such that $y\in S(x_{i_k})$ for all $k$. From \eqref{eq:sup-diam-0} it follows that $h(y,x_{i_k})\to0$, as $k\to\infty$, and (ii)  yields $x_{i_k}\to y$. But as a subsequence of a convergent to $\bar x$ sequence, $x_{i_k}\to \bar x$. Since $X$ is a Hausdorff space, $y= \bar x$.

    That is, $\bar x \in S(\bar x)$. Since $\astone$ includes \eqref{eq:non-st}, this is a contradiction.
\end{proof}

\begin{proof}\textbf{of Theorem~\ref{thm:loev-topo}.}
Assume that (\emph{a}) is not true. From Lemma~\ref{lem:loev-induction} it follows that either there is a finite $S$-orbit $x_0,x_1,\ldots,x_n$ such that $S(x_n) = \emptyset$ and we are done, or there is an infinite $S$-orbit that satisfies \eqref{eq:x_i_i+1_jump}, and converges to some $\bar x\in X$. Then $h(x_{i+1},x_i)\to 0$, because of (iii), so \eqref{eq:x_i_i+1_jump} implies \eqref{eq:sup-diam-0} and Lemma~\ref{lem:to0-empty} gives $S(\bar x) = \emptyset$.
\end{proof}

Next, we will analyze a bit the definition of an $h$-space.It is remarkable that generalized distance function $h$ actually defines the topology.
\begin{proposition}
    \label{prop:h-def-top}
    Let $(X,\tau,h)$ be an h-space. Then
    \begin{equation}
        \label{eq:ii-prim}
        x_n\to x \iff h(x,x_n)\to 0.
    \end{equation}
\end{proposition}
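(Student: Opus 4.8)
The plan is to establish the two implications separately. The implication $h(x,x_n)\to 0 \Rightarrow x_n\to x$ requires no work at all: it is literally axiom (ii) of Definition~\ref{def:h-space}. So the whole content lies in the reverse implication $x_n\to x \Rightarrow h(x,x_n)\to 0$, and there the only pertinent tool is axiom (iii), which controls $h$ merely along \emph{consecutive} terms of a convergent sequence. The idea is therefore to manufacture, out of a hypothetical ``bad'' subsequence, a new convergent sequence whose consecutive pairs reproduce exactly the quantities $h(x,x_n)$ one wishes to control.

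Concretely, I would argue by contradiction. Suppose $x_n\to x$ but $h(x,x_n)\not\to 0$. Then there are $\eps>0$ and indices $n_1<n_2<\cdots$ with $h(x,x_{n_k})\ge\eps$ for every $k$; note that $\{x_{n_k}\}_k$, being a subsequence of $\{x_n\}$, still converges to $x$. Now interleave this subsequence with the constant sequence $x$: set $y_{2k-1}:=x_{n_k}$ and $y_{2k}:=x$ for $k\in\N$. A one-line neighbourhood check shows $y_m\to x$ (the odd-indexed subsequence converges to $x$ because $x_{n_k}\to x$, the even-indexed one is constantly $x$, and a sequence that splits into two subsequences both converging to $x$ converges to $x$). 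Applying (iii) to $\{y_m\}$ gives $h(y_{m+1},y_m)\to 0$; but for odd $m=2k-1$ we have $h(y_{m+1},y_m)=h(y_{2k},y_{2k-1})=h(x,x_{n_k})\ge\eps$, the desired contradiction.

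The only delicate point — and the place where one could easily slip — is the orientation of the arguments of $h$: since $h$ need not be symmetric, it matters that axiom (iii) reads $h(x_{n+1},x_n)\to 0$, with the ``later'' point in first position, so the interleaving must be arranged as $x_{n_k},x,x_{n_{k+1}},x,\dots$ (and \emph{not} $x,x_{n_k},x,\dots$) in order to read off $h(x,x_{n_k})$ from the consecutive pairs rather than $h(x_{n_k},x)$. Beyond this, the Hausdorff property and first countability play no role in this particular statement, and axioms (i), (ii) enter only through the trivial implication.
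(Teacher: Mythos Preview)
Your proof is correct and essentially identical to the paper's: both argue by contradiction and interleave the (sub)sequence with the constant $x$ to force a violation of axiom~(iii). The only cosmetic differences are that the paper skips the preliminary extraction of a bad subsequence and writes the interleaved sequence as $x,x_1,x,x_2,\ldots$; incidentally, your cautionary remark that the ordering $x,x_{n_k},x,\ldots$ would fail is not quite right, since in any interleaving of this kind both $h(x,x_{n_k})$ and $h(x_{n_k},x)$ occur among the consecutive increments.
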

\begin{proof}
    Because of  Definition~\ref{def:h-space}(ii), we have to prove only that $x_n\to x$ implies $h(x,x_n)\to0$. Assume this were not the case and let $x_n$ be such a sequence that $x_n\to x$, but $\limsup_{n\to\infty} h(x,x_n)>0$. Consider the sequence
    $$
        x,x_1,x,x_2,\ldots,x,x_n,\ldots.
    $$
    It also converges to $x$, but Definition~\ref{def:h-space}(iii) fails. Contradiction.
\end{proof}

Because of Proposition~\ref{prop:h-def-top}, in an $h$-space $(X,\tau,h)$, for each positive integer $n$ and each $x\in X$, the level set
\begin{equation}
    \label{eq:v_n-h-def}
    V_n(x) := \{y\in X:\ h(x,y) < 1/n\}
\end{equation}
is a neighbourhood of $x$, that is, there is an open  set $U\in \tau$ such that $x\in U$ and $U\subset V_n(x)$, or, in other words, $x$ is in the interior of $V_n(x)$. In the terminology of \cite{Junnila}, the set-valued mapping
$$
    x\rightrightarrows V_n(x)
$$
is a \textit{neighbournet}, so $(X,\tau)$ has a countable family of neighbournets that define the topology in the sense that $U\in\tau$ if and only if for each $x\in U$ there is $n\in\mathbb{N}$ such that $V_n(x)\subset U$.

Since the function $h$ is not a part of the conclusion of Theorem~\ref{thm:loev-topo}, it is indeed possible to formulate the assumptions in purely topological terms. However, we started with $h$-spaces in order to keep this section consistent with the next sections, which are more analytical. Here is the straightforward characterization of when a topological space can be turned into an $h$-space.

\begin{proposition}
    \label{pro:char-h-space}
    Let $(X,\tau)$ be a first countable Hausdorff space. Then there exists a generalized distance function $h:X\times X\to \mathbb{R}$ such that $(X,\tau,h)$ 
    is an $h$-space if and only if there exists a countable nested family of neighbournets $\{V_n(x)\}_{n\in\mathbb{N}, x\in X}$, 
    which defines $\tau$, that is,
    \begin{equation}
        \label{eq:V-def-tau}
        \bigcap_{n=1}^\infty V_n(x) = \{x\},\quad\forall x\in X,
    \end{equation}
    and if $x_n\to x$ and $i_n\in\mathbb{N}$ are such that $x_n\notin V_{i_n}(x_{n+1})$, then
    \begin{equation}
        \label{eq:in-to-infty}
        \lim_{n\to\infty} i_n = \infty.
    \end{equation}
\end{proposition}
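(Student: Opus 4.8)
The plan is to prove the two directions separately, with the "only if" direction being essentially a repackaging of the discussion already given, and the "if" direction requiring an explicit construction of $h$ from the family of neighbournets.

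For the "only if" direction, suppose $(X,\tau,h)$ is an $h$-space. I would set $V_n(x) := \{y : h(x,y) < 1/n\}$ as in \eqref{eq:v_n-h-def}. We already know each $V_n(x)$ is a neighbournet, that the family is nested (since $1/(n+1) < 1/n$), and that \eqref{eq:V-def-tau} holds because $h(x,y)=0 \iff x=y$ by Definition~\ref{def:h-space}(i). That this family defines $\tau$ follows from Proposition~\ref{prop:h-def-top}: $U\in\tau$ iff for each $x\in U$ some $V_n(x)\subset U$. It remains to verify \eqref{eq:in-to-infty}. Suppose $x_n\to x$ and $x_n\notin V_{i_n}(x_{n+1})$, i.e. $h(x_{n+1},x_n)\ge 1/i_n$. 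Since $x_n\to x$, Definition~\ref{def:h-space}(iii) gives $h(x_{n+1},x_n)\to 0$, hence $1/i_n\to 0$, so $i_n\to\infty$, as required.

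For the "if" direction, suppose we are given a nested countable family $\{V_n(x)\}$ satisfying \eqref{eq:V-def-tau} and the condition \eqref{eq:in-to-infty}. The natural definition is
$$
    h(x,y) := \inf\{\, 1/n : y\in V_n(x)\,\},
$$
with the convention $\inf\emptyset = 1$ (or any fixed positive constant), so that $h$ takes values in $(0,1]\cup\{0\}\subset\R^+$; set $h(x,x)=0$. I would then check the three axioms. Axiom (i): $h(x,y)=0$ forces $y\in V_n(x)$ for all $n$ (using nestedness to see the infimum is attained only in the limit), so $y\in\bigcap_n V_n(x)=\{x\}$ by \eqref{eq:V-def-tau}; conversely $h(x,x)=0$ by definition. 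Axiom (ii): if $h(x,x_n)\to 0$ then for every $k$ eventually $x_n\in V_k(x)$; since the family defines $\tau$ and the $V_k(x)$ are a neighbourhood base at $x$, this gives $x_n\to x$. Axiom (iii) is where the hypothesis \eqref{eq:in-to-infty} is used: suppose $x_n\to x$ but $h(x_{n+1},x_n)\not\to 0$; then along a subsequence $h(x_{n+1},x_n)\ge\delta>0$, which means for $1/i_n<\delta$ we have $x_n\notin V_{i_n}(x_{n+1})$ with $i_n$ bounded — contradicting \eqref{eq:in-to-infty} (after noting we may extract indices so that the relevant "witness" indices $i_n$ stay bounded). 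Here one must be slightly careful that \eqref{eq:in-to-infty} is stated for $x_n\notin V_{i_n}(x_{n+1})$, matching the order of arguments in axiom (iii), which it does.

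The main obstacle I anticipate is matching the asymmetric roles of the two arguments in $h$ and in the neighbournets correctly — the condition \eqref{eq:in-to-infty} involves $x_n\notin V_{i_n}(x_{n+1})$, i.e. membership relative to the \emph{later} point, which is exactly what is needed to control $h(x_{n+1},x_n)$ in axiom (iii), so one must define $h(x,y)$ in terms of "$y\in V_n(x)$" (not "$x\in V_n(y)$") for the bookkeeping to come out right. A secondary technical point is the handling of the infimum and the convention for $\inf\emptyset$, and checking that $h$ genuinely maps into $\R^+$; these are routine once the convention is fixed. Beyond that, the verifications of (i) and (ii) are direct consequences of \eqref{eq:V-def-tau} and of the family defining $\tau$, and require no real ingenuity.
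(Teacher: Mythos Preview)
Your proposal is correct and follows essentially the same route as the paper: the paper defines $h(x,y):=1/n$ for $y\in V_{n-1}(x)\setminus V_n(x)$ (with $V_0(x):=X$), which differs from your infimum only by an index shift, and verifies (iii) by first collating consecutive duplicates so that $x_{n+1}\neq x_n$ throughout and then setting $i_n:=1/h(x_{n+1},x_n)$ directly --- this is the clean way to carry out the subsequence step you flagged as needing care.
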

\begin{proof}
    Assume that $(X,\tau,h)$ is an $h$-space and define $V_n(x)$ via \eqref{eq:v_n-h-def}. It is clear that these are nested and we have already seen that they define $\tau$, but let's check \eqref{eq:V-def-tau} formally. If $y$ is in the intersection in the left hand side of \eqref{eq:V-def-tau}, then by definition $h(x,y) < 1/n$ for all $n$'s, so $h(x,y)=0$ and $x=y$ by Definition~\ref{def:h-space}(i).

       Assume now that \eqref{eq:in-to-infty} is false, so there is $x_n\to x$ and $k\in\mathbb{N}$ such that $x_n\notin V_k(x_{n+1})$ for infinitely many $n$'s, that is, $h(x_{n+1},x_n)\ge 1/k$ for infinitely many $n$'s, see \eqref{eq:v_n-h-def}. This contradicts Definition~\ref{def:h-space}(iii).

    For the opposite direction, let $\{V_n(x)\}_{n\in\mathbb{N}, x\in X}$ be a nested countable family of neighbournets in $(X,\tau)$ satisfying \eqref{eq:V-def-tau} and \eqref{eq:in-to-infty}. Set $V_0(x) := X$ and define $h:X\times X\to\mathbb{R}^+$ by $h(x,x):=0$, and
    $$
        h(x,y) := \frac{1}{n},\quad\forall y \in V_{n-1}(x)\setminus V_n(x),\ \forall n\in\mathbb{N}.
    $$
    Now, \eqref{eq:V-def-tau} implies  Definition~\ref{def:h-space}(i).

    If $h(x,x_n)\to 0$ then for each fixed $k\in\mathbb{N}$,  $h(x,x_n)<1/k$ for all but finitely many $x_n$'s, so, by definition,  all but finitely many $x_n$'s will be in $V_k(x)$, meaning that $x_n\to x$, and  Definition~\ref{def:h-space}(ii) is verified.

    Consider an arbitrary convergent sequence $x_n\to x$. Fix $\varepsilon > 0$. If $h(x_{n+1},x_n) = 0$
    for all but finitely many $n$'s, then $h(x_{n+1},x_n) = 0$ eventually and   Definition~\ref{def:h-space}(iii) is in force. If not, by collating the indices we may assume without loss of generality that $x_{n+1}\neq x_n$ for all $n\in\mathbb{N}$. Set $i_n :=1/h(x_{n+1},x_n)$. By definition  $x_n\notin V_{i_n}(x_{n+1})$ and \eqref{eq:in-to-infty} gives $h(x_{n+1},x_n)\to0$.
\end{proof}
There is much more to be considered with respect to topology. For example, we have not touched the dual topology. However,  we do not have space for that here, so we proceed to our next topic.

Often an intermediate property -- between $(\ast)$ and $\astone$ -- will be satisfied and it is worth considering.

\begin{definition}
    \label{def:ast-2}
    The set-valued map $S:X\rightrightarrows X$, where $(X,\tau)$ is a first countable topological space, satisfies the property $\astwo$ at $x_0\in X$, if \eqref{eq:non-st} is fulfilled for $S$,  and for each infinite $S$-orbit $\{x_i\}_{i=0}^\infty$ starting at $x_0$, each subsequence $\{x_{i_k}\}_{k=1}^\infty$ converging to $x$, and each $y\in S(x)$, there is a subsequence $\{x_{i_{k_j}}\}_{j=1}^\infty$ such that
    $$
        y \in S(x_{i_{k_j}}),\quad\forall j\in\mathbb{N}.
    $$
\end{definition}
Obviously, $\astorig\Rightarrow\astwo\Rightarrow\astone$. More precisely, $\astwo$ at $x_0$ implies $\astone$ at $x_0$, while $\astorig$ implies $\astwo$ for all $x$.

 Under this stronger assumption $\astwo$, we can specify the nature of potential divergent orbit in Theorem~\ref{thm:loev-topo}. Recall that a set is \emph{discrete} if it is closed and all its points are isolated. Clearly, a sequence in a first countable space is discrete if and only if it has no convergent subsequence. Naturally, a discrete orbit is such orbit which is discrete when considered as a set.

\begin{proposition}
    \label{pro:loev-topo}
    Let $(X,\tau,h)$ be an $h$-space. Let $S:X\rightrightarrows X$ be a  set-valued map satisfying $\astwo$ at $x_0$. Then at least one of the following is true:

    (a) There is an $S$-orbit starting at $x_0$, say $\{x_i\}_{i\ge0}$, such that its closure is not entirely contained in $\dom S$, i.e.,
    $$
        \overline{\{x_i\}_{i\ge0}}\not\subset\dom S.
    $$

    (b) There is a divergent $S$-orbit starting at $x_0$, say $\{x_i\}_{i=0}^\infty$, such that
    $$
        \limsup_{i\to\infty} h(x_{i+1},x_i) > 0.
    $$

    (c) There is an infinite discrete $S$-orbit starting at $x_0$.
\end{proposition}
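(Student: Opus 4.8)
The plan is to invoke the dichotomy of Lemma~\ref{lem:loev-induction} and then sort the ``infinite orbit'' alternative according to the dynamical behaviour of the orbit it produces. Since $\astwo$ at $x_0$ entails \eqref{eq:non-st} and Definition~\ref{def:h-space}(i) holds in any $h$-space, Lemma~\ref{lem:loev-induction} applies at $x_0$. If it returns a finite $S$-orbit $x_0,\dots,x_n$ with $S(x_n)=\emptyset$, then its closure is the finite set $\{x_0,\dots,x_n\}$ itself, which is not contained in $\dom S$, so (a) holds. Otherwise we obtain an infinite $S$-orbit $\{x_i\}_{i\ge 0}$ starting at $x_0$ and satisfying the jump estimate \eqref{eq:x_i_i+1_jump}, and the remainder of the proof studies this single orbit.

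I would then split into three cases. If $\{x_i\}$ has no convergent subsequence, then, being a sequence in a first countable space, it is a discrete orbit, so (c) holds. If $\{x_i\}$ has a convergent subsequence but $\limsup_{i\to\infty} h(x_{i+1},x_i)>0$, then $\{x_i\}$ cannot itself converge, for $x_i\to\bar x$ would force $h(x_{i+1},x_i)\to 0$ by Definition~\ref{def:h-space}(iii); hence $\{x_i\}$ is a divergent $S$-orbit with positive $\limsup$, i.e.\ (b). The remaining case is that $\{x_i\}$ has a subsequence $x_{i_k}\to\bar x$ and, simultaneously, $h(x_{i+1},x_i)\to 0$; then \eqref{eq:x_i_i+1_jump} forces $\min\{1,\sup h(S(x_i),x_i)\}\to 0$, hence $\sup h(S(x_i),x_i)\to 0$.

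In this last case I claim $S(\bar x)=\emptyset$, which yields (a) since $\bar x\in\overline{\{x_i\}_{i\ge0}}$. Suppose $y\in S(\bar x)$. As $\{x_i\}$ is an infinite $S$-orbit starting at $x_0$ and $x_{i_k}\to\bar x$, property $\astwo$ at $x_0$ provides a further subsequence $\{x_{i_{k_j}}\}_j$ with $y\in S(x_{i_{k_j}})$ for all $j$; then $h(y,x_{i_{k_j}})\le\sup h(S(x_{i_{k_j}}),x_{i_{k_j}})\to 0$, so $x_{i_{k_j}}\to y$ by Definition~\ref{def:h-space}(ii), while also $x_{i_{k_j}}\to\bar x$, whence $y=\bar x$ since $X$ is Hausdorff --- contradicting \eqref{eq:non-st}. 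This is exactly the argument of Lemma~\ref{lem:to0-empty}, run along a subsequence, which is precisely what $\astwo$ (rather than $\astone$) permits.

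The one point needing care --- and the main obstacle --- is keeping the case distinction exhaustive and matching it correctly to (a), (b), (c): a divergent orbit may still possess convergent subsequences, so conclusion (b) must be extracted exactly when $\limsup_i h(x_{i+1},x_i)>0$, and when that $\limsup$ vanishes one is forced back onto the emptiness argument, which genuinely needs the subsequential hypothesis $\astwo$ and not merely $\astone$.
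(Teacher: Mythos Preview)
Your proof is correct and follows essentially the same approach as the paper's own proof: both invoke Lemma~\ref{lem:loev-induction}, dispose of the finite case via (a), then on the infinite orbit branch split according to whether there is a convergent subsequence (if not, (c)) and whether $\limsup_i h(x_{i+1},x_i)>0$ (if so, (b)), and finally use $\astwo$ along a convergent subsequence together with the jump estimate and the Hausdorff property to force $S(\bar x)=\emptyset$, yielding (a). The only cosmetic difference is that the paper assumes ``(b) is not true'' and derives a contradiction directly from \eqref{eq:x_i_i+1_jump}, whereas you first pass through the intermediate conclusion $\sup h(S(x_i),x_i)\to 0$; the substance is identical.
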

\begin{proof}
    Let $x_0,x_1,\dots$ be the $S$-orbit given by Lemma~\ref{lem:loev-induction}. If it is finite,  then its end point is outside of $\dom S$ and (\emph{a}) holds. So, assume that it is an infinite sequence: $\{x_i\}_{i=0}^\infty$. If it has no convergent subsequences, we are done, because (\emph{c}) holds.

    So, let $x_{i_k} \to x$, as $k \to \infty$. If $S(x)=\emptyset$ then (\emph{a}) holds, so let $y\in S(x)$. From $\astwo$ there is further subsequence $\{x_{i_{k_j}}\}_{j=1}^\infty$ such that $y\in S(x_{i_{k_j}})$. If (\emph{b}) is not true, then
    $$
        \min\{1, h(y,x_{i_{k_j}})\} \le  2 h(x_{i_{k_j}+1}, x_{i_{k_j}}) \to 0,
    $$
    as $j\to\infty$. By  Definition~\ref{def:h-space}(ii) this implies $x_{i_{k_j}} \to y$, as $j\to\infty$, and, therefore, because $X$ is Hausdorff, $x=y$, contradiction.
\end{proof}

As it is evident from \cite{Ivanov-Zlateva}, and as we will see again later on here, many useful maps are \emph{idempotent}, that is,
\begin{equation}
    \label{eq:mono}
    S^2(x) = S(S(x)) \subset S(x),\quad \forall x\in X.
\end{equation}
Another way of defining the idempotency of $S$ is to say that
$$
    y \preceq x \iff y \in \{x\}\cup S(x)
$$
defines a partial ordering on $X$. It will be noted later that most of the applications found in the literature rely on introducing some partial ordering. We, on the other hand, need not impose idempotency on $S$. 

If $S$ is idempotent, then, of course, each subsequence of an infinite $S$-orbit is again an $S$-orbit, and, therefore for an idempotent map $\astone$ is equivalent to $\astwo$, so the latter is not as artificial as it may seem at first glance.

Also, for an idempotent $S$, Proposition~\ref{pro:loev-topo}(\emph{a}) can be reformulated as: \emph{There is an $S$-orbit starting at $x_0$ and ending outside of $\dom S$}.

\section{LOEV principle in  $\Sigma_g$-semicomplete  premetric space}\label{sec:semicomplete}

Let $(X,\tau)$ be a  topological space. A generalized distance function $g:X \times X \rightarrow \R^+$ with the following properties:
\begin{itemize}
     \item[{\rm (P1)}] $g(x,x)=0$ for any $x\in X$ and $g(x,y) > 0$ whenever $x \neq y$;
     \item[{\rm (P2)}] $g(x, \cdot)$ is continuous for every fixed $x\in X$
%     \item[{\rm (P3)}] $g(x,x_n) \rightarrow 0$ $\Leftrightarrow$ $x_n \rightarrow x$.
\end{itemize}
will be called a \emph{premetric function on} $X$. Obviously, (P1) is the same as    Definition~\ref{def:h-space}(i), but we repeat it here for ease of reference. If there exists a premetric function $g$ on a topological space $X$, then  necessarily $X$  is a Hausdorff space. A first countable topological space $(X,\tau)$ with premetric function $g$ on it will be called \emph{premetric space} and will be denoted by $(X,\tau,g)$.

Premetric functions exist on very large classes of spaces. For example,
let $X$ be a completely regular and first countable space. Fix a point $x \in X$ and let  $\{U_k\}_{k\in\mathbb{N}}$ be a  nested local base at $x$. For a $k\in \N$ let $p_k: X \rightarrow [0, 1]$ be a continuous function such that $p_k(x)=0$ and $p_k|_{X \setminus U_k} \equiv 1$. Define
\begin{equation}
    \label{eq:g-def}
    g(x,y):=\sum_{k=1}^{\infty}\frac{p_k(y)}{2^k}.
\end{equation}
The function $g: X \times X \rightarrow [0,1]$ is a premetric function that, moreover, satisfies  Definition~\ref{def:h-space}(ii).

$\Sigma$-Cauchy sequences and $\Sigma$-semicompleteness are considered in Suzuki~\cite{Suzuki-2018}. We extend these notions  to a premetric space $(X,\tau,g)$ in the following way.

For a sequence $\{x_n\}_{n\ge 0} $ in a premetric space $(X,\tau,g)$,  $\ds  \sum_{i=0}^{\infty} g(x_{i+1},x_i)$ can be considered as the $g$-length of the sequence. If a sequence $\{x_n\}_{n\ge 0} $ has a finite $g$-length, it will be called  $\Sigma_g$-Cauchy sequence. The notion of $\Sigma_g$-Cauchy sequence also appears as absolutely convergent sequence in \cite{MacNeille}.

\begin{definition} \label{completeness}
A first countable premetric space $(X,\tau,g)$ is called  $\Sigma_g$-semicomplete if every $\Sigma_g$-Cauchy sequence in $X$ has a convergent subsequence.
\end{definition}

Next we will prove a variant of LOEV principle in a  $\Sigma_g$-semicomplete space.

\begin{theorem}\label{LOEV}
Let $(X,\tau,g)$ be a $\Sigma_g$-semicomplete space, let $S:X \rightrightarrows X$ satisfy $\astwo$ at $x_0 \in X$. Then at least one of (a) and (b) below is true:

(a) There is an $S$-orbit starting at $x_0$ with an infinite $g$-length;

(b) There is an $S$-orbit starting at $x_0$ with a finite $g$-length and such that its closure is not contained in the domain of $S$.
\end{theorem}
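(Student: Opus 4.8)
The plan is to run the same scheme as in the proofs of Theorem~\ref{thm:loev-topo} and Proposition~\ref{pro:loev-topo}, with one essential modification: a premetric function need not satisfy Definition~\ref{def:h-space}(ii), so wherever that axiom was used I would instead exploit the continuity property (P2). First I would apply Lemma~\ref{lem:loev-induction} with $h:=g$. This is legitimate: the lemma asks only that $h$ satisfy Definition~\ref{def:h-space}(i), which is exactly (P1), and uses no other property of $h$. It produces an $S$-orbit $x_0,x_1,\dots$ starting at $x_0$ that is either finite with $S$ empty at its last point, or infinite and satisfying the jump estimate \eqref{eq:x_i_i+1_jump} with $g$ in place of $h$.

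If the orbit is finite, say $x_0,\dots,x_n$ with $S(x_n)=\emptyset$, then its closure is the finite set $\{x_0,\dots,x_n\}$, which contains $x_n\notin\dom S$; the orbit trivially has finite $g$-length, so (b) holds. So I would assume the orbit is infinite and that (a) fails, i.e.\ $\sum_{i\ge 0} g(x_{i+1},x_i)<\infty$. Then $\{x_i\}_{i\ge 0}$ is $\Sigma_g$-Cauchy, so by $\Sigma_g$-semicompleteness it has a subsequence $x_{i_k}\to\bar x$. Finiteness of the $g$-length forces $g(x_{i+1},x_i)\to 0$, and then \eqref{eq:x_i_i+1_jump} gives $\min\{1,\sup g(S(x_i),x_i)\}\to 0$, hence $\sup g(S(x_i),x_i)\to 0$.

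It remains to show $S(\bar x)=\emptyset$: once this is established, $\bar x\in\overline{\{x_i\}_{i\ge 0}}\setminus\dom S$ and, (a) being false, the orbit has finite $g$-length, so (b) holds. Assume $y\in S(\bar x)$. Property $\astwo$ at $x_0$, applied to the convergent subsequence $x_{i_k}\to\bar x$, yields a further subsequence $\{x_{i_{k_j}}\}_{j}$ with $y\in S(x_{i_{k_j}})$ for all $j$, whence $g(y,x_{i_{k_j}})\le\sup g(S(x_{i_{k_j}}),x_{i_{k_j}})\to 0$. On the other hand $x_{i_{k_j}}\to\bar x$, so continuity of $g(y,\cdot)$, i.e.\ (P2), gives $g(y,x_{i_{k_j}})\to g(y,\bar x)$. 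Hence $g(y,\bar x)=0$, and (P1) forces $y=\bar x$, so $\bar x\in S(\bar x)$, contradicting the non-stationarity condition \eqref{eq:non-st} built into $\astwo$.

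The main obstacle is precisely this last step: the $h$-space argument deduced $x_{i_{k_j}}\to y$ from $g(y,x_{i_{k_j}})\to 0$ via Definition~\ref{def:h-space}(ii) and then closed the loop using the Hausdorff property, but (ii) is not available for a general premetric function. The remedy is to pass the limit in the other direction — along $x_{i_{k_j}}\to\bar x$ using continuity of $g$ in its second slot — to evaluate $g(y,\bar x)$ directly. Beyond that, the proof is routine; one only needs to confirm that $\Sigma_g$-semicompleteness is invoked solely to extract the convergent subsequence (not to identify its limit), and that Lemma~\ref{lem:loev-induction} genuinely uses nothing about $h$ except (P1).
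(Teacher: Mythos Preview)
Your proof is correct and follows essentially the same route as the paper: apply Lemma~\ref{lem:loev-induction} with $h=g$, use the finite $g$-length to extract a convergent subsequence via $\Sigma_g$-semicompleteness, and then close the argument by combining $\astwo$, (P2) and (P1) exactly as you do. The only cosmetic difference is that the paper phrases the final contradiction as $\sup g(S(x_{i_{k_j}}),x_{i_{k_j}})\ge g(y,x_{i_{k_j}})\to g(y,\bar x)>0$ against \eqref{eq:g-S-goes-to-0}, whereas you pass to the limit to get $g(y,\bar x)=0$ and then invoke (P1); these are the same step read in opposite directions.
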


\begin{proof}
Let $x_0,x_1,\dots$ be the orbit given by Lemma~\ref{lem:loev-induction} for $S$ and $g$. If it is finite, or of infinite $g$-length, we are done. So, assume that it constitutes a $\Sigma_g$-Cauchy sequence $\{x_i\}_{i \geq 0}$. Obviously then $g(x_{i+1},x_i) \to 0$, as $i\to\infty$.  From \eqref{eq:x_i_i+1_jump} it follows that
\begin{equation}
    \label{eq:g-S-goes-to-0}
    \lim_{i\to\infty}\sup g(S(x_i),x_i) = 0.
\end{equation}
 From the  $\Sigma_g$-semicompleteness of the space $(X,\tau,g)$ it follows that  there is a convergent subsequence $\{x_{i_k}\}_{k\in\mathbb{N}}$. Let  $x_{i_k} \rightarrow  x\in X$, as $k \rightarrow \infty$. If $S(x) = \emptyset$, we are done, so assume there is $y \in S({x })$. By \eqref{eq:non-st} we have $y\neq x$. Since  $g$ has property (P1), we have that $g(y,x)>0$.

 By the property $\astwo$ of $S$ at $x_0$, there is a further subsequence $\{x_{i_{k_j}}\}_{j\in\mathbb{N}}$ such that  $y \in S(x_{i_{k_j}})$ for all $j\in \N$. By (P2) $g(y,\cdot)$ is continuous, so when $j \rightarrow \infty$, $g(y, x_{i_{k_j}}) \rightarrow g(y, x)>0$, . But then $\sup g(S(x_{i_{k_j}}),x_{i_{k_j}}) \ge g(y, x_{i_{k_j}}) > g(y, x)/2 > 0$ for all large enough $j$'s, which contradicts \eqref{eq:g-S-goes-to-0}.
\end{proof}

The next result shows that if LOEV holds  in a premetric space $(X,\tau,g)$, then necessarily the space is  $\Sigma_g$-semicomplete.

\begin{theorem}\label{LOEV inverse} 
Let  $(X,\tau,g)$ be a first countable premetric space. If for  any $x_0\in X$ and any $S:X \rightrightarrows X$ that satisfies $\astwo$ for   $x_0 $ at least one of (a) and (b) below is true:

(a) There is an $S$-orbit starting at $x_0$ with infinite $g$-length;

(b) There is ${x} \in X$ such that $S({x})=\varnothing$,

\noindent
then the space $(X,\tau,g)$ is  $\Sigma_g$-semicomplete.
\end{theorem}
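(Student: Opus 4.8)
The plan is to argue by contraposition. Assuming $(X,\tau,g)$ is \emph{not} $\Sigma_g$-semicomplete, I want to exhibit a single point $x_0\in X$ and a single set-valued map $S$ which satisfies $\astwo$ at $x_0$, is nonempty-valued everywhere (so (b) fails), and has the property that every $S$-orbit starting at $x_0$ has finite $g$-length (so (a) fails). This contradicts the standing hypothesis, and the theorem follows.

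Concretely, I would start from a $\Sigma_g$-Cauchy sequence $\{a_i\}_{i\ge0}$ with $L:=\sum_{i\ge0}g(a_{i+1},a_i)<\infty$ having no convergent subsequence. Since $(X,\tau)$ is first countable and Hausdorff, the set $A:=\{a_i:i\ge0\}$ is then closed and discrete, and each of its points equals $a_i$ for only finitely many $i$ (this is the fact recalled just before Proposition~\ref{pro:loev-topo}: a sequence is discrete iff it has no convergent subsequence; and a value attained infinitely often would give a constant, hence convergent, subsequence). Thus $\mu(v):=\max\{i:a_i=v\}$ is a well-defined nonnegative integer for each $v\in A$. I would put $x_0:=a_0$ and define
$$S(v):=\{a_{\mu(v)+1}\}\ \text{ for }v\in A,\qquad S(x):=\{a_0\}\ \text{ for }x\notin A.$$
Then $S$ is nonempty-valued everywhere, $\eqref{eq:non-st}$ holds (for $v\in A$ we have $a_{\mu(v)+1}\ne v$ by maximality of $\mu(v)$; for $x\notin A$, $a_0\ne x$), and $S(A)\subseteq A$, so every $S$-orbit from $x_0$ stays in $A$.

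Next I would check the two substantive points. First, $\astwo$ at $x_0$: any $S$-orbit from $x_0$ lies in the closed discrete set $A$, so a convergent subsequence of it is eventually constant, and then the required sub-subsequence in the conclusion of $\astwo$ is simply that eventual constant tail. Second, the $g$-length: since $S$ is single-valued on $A$, there is a unique (infinite) $S$-orbit $\{x_k\}_{k\ge0}$ from $x_0$; writing $n_k:=\mu(x_k)$ one has $a_{n_k}=x_k$, $x_{k+1}=a_{n_k+1}$, and $n_{k+1}=\mu(x_{k+1})\ge n_k+1$, so $n_0<n_1<n_2<\cdots$. Hence
$$\sum_{k\ge0}g(x_{k+1},x_k)=\sum_{k\ge0}g(a_{n_k+1},a_{n_k})\le\sum_{j\ge0}g(a_{j+1},a_j)=L<\infty,$$
so the orbit has finite $g$-length. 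Therefore neither (a) nor (b) holds for this $S$ and $x_0$, the desired contradiction.

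The one real obstacle is the absence of a triangle inequality for $g$: it is precisely this that forces $S$ to move $v$ to the successor of the \emph{last} occurrence of $v$ along $\{a_i\}$. That choice yields the identity $a_{n_k}=x_k$, which collapses each step $x_k\to x_{k+1}$ of the orbit onto an honest step $a_{n_k}\to a_{n_k+1}$ of the original sequence, so the orbit's $g$-length is literally a subseries of $\sum_j g(a_{j+1},a_j)$ and no metric-type estimate is needed. Naive substitutes — deleting repeated terms in order of first appearance, or letting $S$ jump an arbitrary distance forward — reshuffle the increments and, without the triangle inequality, need not produce a finite-$g$-length orbit. The remaining verifications (nonemptiness, nonstationarity, closedness and discreteness of $A$, and $\astwo$) are routine.
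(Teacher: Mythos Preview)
Your proof is correct and follows essentially the same approach as the paper's: both hinge on the ``last occurrence'' index (your $\mu$, the paper's $\sigma$ inside Lemma~\ref{rem:1}) so that each step of the constructed orbit coincides with an honest step $a_{n_k}\to a_{n_k+1}$ of the original sequence, making the $g$-length a subseries of $\sum_j g(a_{j+1},a_j)$ without any appeal to a triangle inequality. The only cosmetic differences are that the paper first extracts a subsequence with pairwise distinct terms in a separate lemma and then defines $S$ on that range (setting $S(x)=M$ off it and verifying the stronger property $(\ast)$), whereas you build the extraction directly into $S$ via $\mu$ and verify $\astwo$ at $x_0$; the arguments are otherwise the same.
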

In the sequel we will essentially use the following result which holds in a space not $\Sigma_g$-semicomplete.
\begin{lemma}
    \label{rem:1}
    Let  $(X,\tau,g)$ be a first countable premetric space which is not $\Sigma_g$-semicomplete. Then there exists a countable discrete set
    $$
        M = \{x_n:\ n\in\mathbb{N}\},
    $$
    such that the sequence $\{x_n\}_{n=1}^\infty$ has a finite $g$-length.

    Moreover, there is a strictly positive lower semicontinuous function $f:X\to\mathbb{R}^+\setminus\{0\}$ such that $\dom f \equiv M$, $\inf f = 0$, and
    \begin{equation}
        \label{eq:f-g-M-rel}
        f(x_n) - f(x_{n+1}) = g(x_{n+1},x_n), \quad\forall n\in\mathbb{N}.
    \end{equation}
\end{lemma}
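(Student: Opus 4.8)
The plan is to extract the relevant data directly from the failure of $\Sigma_g$-semicompleteness. By Definition~\ref{completeness}, there is a $\Sigma_g$-Cauchy sequence $\{y_n\}_{n\ge0}$ with no convergent subsequence; in particular $\sum_{n\ge0} g(y_{n+1},y_n)<\infty$. First I would pass to a ``tail-efficient'' relabelling: since a sequence in a first countable space has no convergent subsequence iff it is discrete as a set, the set $\{y_n\}$ is already closed and discrete, but I want to arrange that the points are pairwise distinct and that the $g$-length of the tail starting at index $n$ decreases to $0$. Distinctness is free (a convergent-subsequence-free sequence can repeat a value only finitely often, so deleting duplicates and reindexing preserves discreteness and finite $g$-length), and then setting $M=\{x_n:n\in\mathbb N\}$ with $x_n:=y_{n}$ gives the first assertion, with $r_n:=\sum_{i\ge n} g(x_{i+1},x_i)\to 0$.

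For the ``moreover'' part, the natural candidate is to define $f$ on $M$ by the telescoping formula $f(x_n):=r_n=\sum_{i\ge n} g(x_{i+1},x_i)$, so that $f(x_n)-f(x_{n+1})=g(x_{n+1},x_n)$, which is exactly \eqref{eq:f-g-M-rel}; and $f(x):=+\infty$ for $x\notin M$, so that $\dom f=M$. Note $f(x_n)>0$ by (P1) (since $x_{n+1}\ne x_n$ forces $g(x_{n+1},x_n)>0$, hence each partial sum $r_n>0$), and $\inf_{n} f(x_n)=\lim_n r_n=0$ because the series converges. The only substantive point is lower semicontinuity of $f$ on all of $X$. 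Here I would use that $M$ is closed and discrete: for a point $\bar x\notin M$, since $X$ is first countable and $\bar x$ is not a limit of any sequence from $M$ (as $M$ has no accumulation points), there is a neighbourhood of $\bar x$ missing $M$ entirely, on which $f\equiv+\infty$, so lower semicontinuity at $\bar x$ is automatic. For a point $x_k\in M$, again by discreteness there is a neighbourhood $U$ of $x_k$ with $U\cap M=\{x_k\}$, so on $U$ we have $f\ge \min\{f(x_k),+\infty\}=f(x_k)$, giving $\liminf_{x\to x_k} f(x)\ge f(x_k)$; combined with $f(x_k)<\infty$ this is lower semicontinuity at $x_k$.

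The main obstacle I anticipate is purely a matter of bookkeeping at the boundary between $M$ and $X\setminus M$: one must be careful that ``discrete'' is being used in the paper's sense (closed with all points isolated), so that $M$ genuinely has \emph{no} accumulation points in $X$ — this is what makes the lower semicontinuity argument at points of $M$ and off $M$ both go through, and it is exactly what the no-convergent-subsequence hypothesis delivers. A minor stylistic point is that the statement writes $f:X\to\mathbb R^+\setminus\{0\}$, i.e. finite-valued; if one insists on finite values one simply replaces $+\infty$ off $M$ by any fixed value strictly larger than $f(x_1)=r_1$, say $r_1+1$, which changes nothing in the argument since off $M$ the function is locally constant, $\dom f$ is still $M$ in the sense used here (the nonzero-level structure that matters is carried by $M$), and \eqref{eq:f-g-M-rel} is untouched; I would mention this adjustment explicitly to match the stated codomain. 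Everything else — the telescoping identity, strict positivity, $\inf f=0$, finite $g$-length of $\{x_n\}$ — is immediate from the choice of the $\Sigma_g$-Cauchy sequence.
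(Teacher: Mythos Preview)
Your outline matches the paper's proof almost exactly in spirit --- take a $\Sigma_g$-Cauchy sequence with no convergent subsequence, thin it to distinct terms, and set $f(x_n)$ to be the tail sum --- but there is one genuine gap at the thinning step. You write that ``deleting duplicates and reindexing preserves \ldots\ finite $g$-length'', treating this as free. It is not: $g$ is only a premetric, with \emph{no triangle inequality}, so passing to an arbitrary subsequence need not preserve finiteness of $\sum g(x_{i+1},x_i)$. If, say, you keep first occurrences and the original sequence is $z_0,z_1,z_2,z_3,\dots$ with $z_3=z_1$, your thinned sequence contains the jump from $z_2$ to $z_4$, and nothing controls $g(z_4,z_2)$ in terms of the original consecutive increments. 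The paper handles exactly this point with a specific construction: set $\sigma(n):=\max\{k:z_k=z_n\}$ (finite because no value repeats infinitely often), then $k_1:=1$, $k_{n+1}:=\sigma(k_n)+1$, and $x_n:=z_{k_n}$. The key feature is that $x_n=z_{k_n}=z_{\sigma(k_n)}$ and $x_{n+1}=z_{\sigma(k_n)+1}$, so consecutive terms of the thinned sequence are literally consecutive terms of the original one, whence $\sum_n g(x_{n+1},x_n)\le\sum_k g(z_{k+1},z_k)<\infty$. Note this may skip some values of the original sequence entirely; it is not merely ``deleting duplicates''.

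A smaller issue: your proposed fix for the codomain (replace $+\infty$ off $M$ by $r_1+1$) would make $f$ finite everywhere and hence $\dom f=X$, not $M$, so the claim ``$\dom f$ is still $M$ in the sense used here'' does not hold. In fact the paper itself defines $f\equiv+\infty$ off $M$; the codomain $\mathbb{R}^+\setminus\{0\}$ in the statement is an imprecision you should simply flag rather than try to repair. Everything else in your plan --- discreteness of $M$, strict positivity via (P1), $\inf f=0$, and lower semicontinuity from isolatedness of the points of $M$ --- is correct and matches the paper.
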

\begin{proof}
    By definition, there is a $\Sigma_g$-Cauchy sequence $\{ z_k\}_{k\ge 0}$ with no convergent subsequences. Let for $n\in\mathbb{N}$
    $$
        \sigma(n) := \max \{k:\ z_k = z_n\}.
    $$
    Note that the maximum is well defined, because if there were infinitely many equal $z_k$'s they would have made a convergent (even constant) subsequence. So,
    $$
        n \le \sigma(n) < \infty.
    $$
    Clearly,
    \begin{equation}
        \label{eq:sigma-alt-def}
        z_{\sigma(n)} = z_n\text{ and }z_k\neq z_n,\ \forall k > \sigma(n).
    \end{equation}
    We construct by induction a subsequence $\{ z_{k_n}\}_{n\in\mathbb{N}}$ in the following way:
     $$
        k_1 := 1,\quad  k_{n+1} := \sigma(k_n) + 1.
    $$
    Clearly, $\{k_n\}_{n\in\N}$ is strictly increasing, because $\sigma(n)\ge n$, so $\{z_{k_n}\}_{n\in\N}$ is a subsequence of  $\{ z_k\}_{k\ge 0}$, thus it has no convergent subsequences. Set
    $$
        x_n := z_{k_n}, \quad\forall n\in\mathbb{N}.
    $$
    Then $\{x_n\}_{n\in\mathbb{N}}$ has no convergent subsequences and also
    $$
        x_i \neq x_j,\quad\forall i\neq j \in \mathbb{N}.
    $$
    Indeed, let $i < j$. Since $\{k_n\}_{n\in\N}$ is  increasing, $k_j \ge k_{i+1} =  \sigma (k_i) + 1 > \sigma (k_i)$, so $x_j = z_{k_j} \neq z_{k_i}=x_i$, see \eqref{eq:sigma-alt-def}. This means that the range $M$ of the sequence $\{x_n\}_{n\in\mathbb{N}}$ is exactly as defined in the Lemma, or, in other words, the map $n\to x_n$ is injective. The set $M$ is discrete, because  $\{x_n\}_{n\in\mathbb{N}}$ has no convergent subsequences.

    To estimate the $g$-length of $\{x_n\}_{n\in\mathbb{N}}$, note that
    $$
        x_n = z_{k_n} = z_{\sigma(k_n)},\text{ and }x_{n+1} = z_{\sigma(k_n)+1},
    $$
    so
    \begin{eqnarray*}
        \sum_{n=1}^\infty g(x_{n+1},x_n) &=& \sum_{n=1}^\infty g(z_{\sigma(k_n)+1},z_{\sigma(k_n)}) \\
        &\le& \sum_{k=0}^\infty g(z_{k+1},z_k) < \infty,
    \end{eqnarray*}
    therefore the $g$-length of $\{x_n\}_{n=1}^\infty$ is finite.

    Define the function
\be\label{eq:f}
f(x):= \left \{\begin{array}{rl} \sum_{i=n}^{\infty} g(x_{i+1},x_{i}), & x = x_{n};  \\
+\infty, & x\notin M.\end{array} \right.
\ee
It is obviously a   strictly positive function. Since the sums are finite, $\dom f\equiv M$. The equality \eqref{eq:f-g-M-rel} is satisfied by definition. Clearly, $\lim_{n\to\infty} f(x_n) = 0$, so $\inf f = 0$. Since $M$ consists of isolated points, $f$ is also lower semicontinuous.
\end{proof}

\begin{proof} \textbf{of Theorem~\ref{LOEV inverse}}. Suppose that $(X,\tau,g)$ is not  $\Sigma_g$-semicomplete. Let the $\Sigma_g$-Cauchy sequence $\{x_n\}_{n=1}^\infty$ and its range $M$ be given by Lemma~\ref{rem:1}. Define the set-valued map  $S$ in the following way:
\begin{equation}
    \label{eq:S-M-def}
    S(x):= \left \{\begin{array}{rl} M, & \text{if } x \notin M;  \\
x_{n+1} ,& \text{if } x = x_{n} \text{ for some } x_{n}  \in M. \end{array} \right.
\end{equation}
We claim that $S$ satisfies $(\ast)$, ergo $\astwo$ at $x_1$.

To this end, it is clear that \eqref{eq:non-st} holds at $S$. Let $y_n\to y$ be given and $z\in S(y)$. If $y\not\in M$, then because the latter is closed, all but finitely many $y_n$'s will be outside of $M$, so $S(y_n) = M\ni z$ for all but finitely many $y_n$'s. If now $y\in M$, say $y = x_k$, then $z=x_{k+1}$. Since $x_k$ is an isolated point of $M$, all but finitely many $y_n$'s will be outside of the set $M\setminus\{x_k\}$, so $z=x_{k+1}\in S(y_n)$ for all but finitely many $y_n$'s. Therefore, $S$ does satisfy $(\ast)$.

By definition, $S(x) \neq \varnothing$ for all $x \in X$, therefore (\emph{b}) does not hold. But  the unique $S$-orbit starting at $x_1$ is $\{x_n\}_{n=1}^\infty$, and it is $\Sigma_g$-Cauchy, so (\emph{a}) does not hold either. This contradiction completes the proof.
\end{proof}

The above result shows that the LOEV principle is, roughly speaking, equivalent to the
 $\Sigma_g$-semicompleteness of the first countable premetric space.

 Further we will prove variants of the Caristi Theorem, Takahashi Theorem and Ekeland Variational Principle in  $\Sigma_g$-semicomplete space and establish that each of them is equivalent to $\Sigma_g$-semicompleteness.

 We begin with a variant of the Caristi Theorem, see \cite{Caristi}.

\begin{theorem} \label{th:Caristi}
Let $(X,\tau,g)$ be a   $\Sigma_g$-semicomplete space. Let the set-valued map $T:X \rightrightarrows X$ satisfy $T(x) \neq \varnothing$ for all $x \in X$. Let the function $f: X \rightarrow \R \cup \{+\infty\}$ be proper, lower semicontinuous and bounded below. Assume that for any $x \in X$ there exists $y \in T(x)$ such that
\begin{equation}\label{CK_condition}
g(y,x) \leq f(x)-f(y). 
\end{equation}
Then $T$ has a fixed point, that is, there exists $\bar{x}\in X$ such that $\bar{x} \in T(\bar{x})$.
\end{theorem}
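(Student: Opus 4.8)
The natural strategy is to realize the Caristi conclusion as an instance of the LOEV principle, Theorem~\ref{LOEV}, by building from $T$ and $f$ a set-valued map $S$ that encodes the descent condition~\eqref{CK_condition} and whose only possible ``long orbit'' behaviour is ruled out by boundedness of $f$, while ``empty value'' forces a fixed point. Concretely, I would set
$$
S(x) := \{\, y \in T(x) \setminus \{x\} :\ g(y,x) \le f(x) - f(y)\,\},
$$
so that \eqref{eq:non-st} holds by construction. The hypothesis of the theorem says exactly that, whenever $x$ is not a fixed point of $T$ witnessed by the descent inequality, $S(x) \neq \emptyset$; more carefully, if $x \notin T(x)$ or every $y \in T(x)$ with $g(y,x)\le f(x)-f(y)$ is $\neq x$, then $S(x)\neq\emptyset$. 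So if $S(\bar x) = \emptyset$ for some $\bar x$, then the only $y \in T(\bar x)$ furnished by the hypothesis must equal $\bar x$, i.e.\ $\bar x \in T(\bar x)$ and we are done; this handles alternative (b) of Theorem~\ref{LOEV} (note the closure in (b) contains the endpoint itself, so a point outside $\dom S$ exists along that orbit). Thus it remains to exclude alternative (a), an $S$-orbit of infinite $g$-length.

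First I would check that $S$ satisfies $\astwo$ at any starting point $x_0$ — in fact it satisfies the stronger $\astone$-type closure property needed. Given an infinite $S$-orbit $\{x_i\}$ starting at $x_0$, any subsequence converging to some $x$, and any $y \in S(x)$: we must produce a sub-subsequence with $y \in S(x_{i_{k_j}})$. Here is where lower semicontinuity of $f$ and continuity of $g(y,\cdot)$ enter. Actually, for the LOEV machinery I only need $\astwo$, and I expect that the cleaner route is: along the orbit, \eqref{CK_condition} telescopes to give $\sum_{i} g(x_{i+1},x_i) \le f(x_0) - \inf f < \infty$, so \emph{every} $S$-orbit starting at $x_0$ automatically has finite $g$-length. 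This directly contradicts alternative (a) of Theorem~\ref{LOEV}, regardless of the fine structure of $\astwo$ — so in fact the telescoping argument both verifies we are in case (b) and is the crux.

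Let me therefore reorganize: the main steps are (1) define $S$ as above and verify \eqref{eq:non-st} and that $S(\bar x) = \emptyset$ yields $\bar x \in T(\bar x)$; (2) verify $S$ satisfies $\astwo$ at $x_0$ — for a subsequence $x_{i_{k}} \to x$ and $y \in S(x)$, since $g(y,x) \le f(x) - f(y)$ strictly relating to the descent is not automatically inherited, I would instead appeal to the structure; more robustly, I would note that it suffices to check the closure property, and if it fails I can fall back on a direct argument; (3) observe that by telescoping \eqref{CK_condition} over any $S$-orbit from $x_0$ we get finite $g$-length, excluding (a); (4) conclude via Theorem~\ref{LOEV} that (b) holds, giving the fixed point. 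The step I expect to be the genuine obstacle is (2): verifying $\astwo$ for this $S$. If the descent inequality is not preserved under the limit one must argue more carefully — e.g.\ using that along the orbit $f(x_i)$ is decreasing and $g(x_{i+1},x_i)\to 0$, and lower semicontinuity of $f$ together with continuity of $g(y,\cdot)$, to transfer $y \in S(x)$ back to tail elements of the subsequence. I would prove: if $x_{i_k}\to x$ then $f(x) \le \lim f(x_{i_k})$ and $g(y,x_{i_k})\to g(y,x)$, while $f(x_{i_k}) - f(y) \to (\lim f(x_{i_k})) - f(y) \ge g(y,x) = \lim g(y,x_{i_k})$; combined with $y\in T(x_{i_k})$ holding for a subsequence (which is where one genuinely needs an assumption on $T$, analogous to how $(\ast)$ was checked for the map in Theorem~\ref{LOEV inverse}), this yields $y\in S(x_{i_{k_j}})$ for large $j$. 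If $T$ lacks such a semicontinuity property one should assume it or restrict $S$ to a single-valued selection; I would flag this and proceed with the telescoping argument as the decisive one.
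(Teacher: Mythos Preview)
Your overall architecture is right --- reduce to Theorem~\ref{LOEV}, exclude the infinite-length alternative by telescoping the descent inequality, and read off a fixed point from the empty-value alternative --- but your specific choice of $S$ has a genuine gap that you yourself flag without resolving. With
\[
S(x) = \{\,y\in T(x)\setminus\{x\}:\ g(y,x)\le f(x)-f(y)\,\},
\]
verifying $\astwo$ requires two transfers along a subsequence $x_{i_k}\to x$: the inequality $g(y,x_{i_k})\le f(x_{i_k})-f(y)$ and the membership $y\in T(x_{i_k})$. The second is hopeless: the theorem imposes no continuity hypothesis on $T$ whatsoever, so there is simply no mechanism forcing $y\in T(x_{i_k})$. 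Your suggested fixes (assume semicontinuity of $T$, or pass to a selection) would prove a strictly weaker statement than the one claimed. Even the first transfer is delicate with a non-strict inequality: lower semicontinuity of $f$ only gives $\liminf_k\bigl(f(x_{i_k})-f(y)-g(y,x_{i_k})\bigr)\ge 0$, which does not force the quantity to be $\ge 0$ along a subsequence.

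The paper's proof sidesteps both obstacles by decoupling $S$ from $T$ entirely and introducing slack: it sets
\[
S(x):=\{y\in X:\ 2^{-1}g(y,x)<f(x)-f(y)\}.
\]
Now the defining condition is an \emph{open} condition in $x$ (lower semicontinuous $f$ minus continuous $g(y,\cdot)$), so the full property $(\ast)$ --- hence $\astwo$ --- follows immediately, with no reference to $T$. Non-emptiness of $S(x)$ under the assumption ``$T$ has no fixed point'' comes from the factor $1/2$: the hypothesis supplies $y\in T(x)$ with $g(y,x)\le f(x)-f(y)$, and since $y\neq x$ forces $g(y,x)>0$, one gets the strict inequality $2^{-1}g(y,x)<f(x)-f(y)$. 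The telescoping step to exclude an infinite-length orbit is then exactly as you describe. So the missing idea in your attempt is precisely this: drop $T$ from the definition of $S$ and replace $\le$ by a strict inequality with a constant $<1$ in front of $g$.
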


\begin{proof}
Assume that $x \notin T(x)$ for all $x \in X$. Consider the map
\[S(x):=\{y \in X: 2^{-1}g(y,x) < f(x)-f(y) \}.\]
We claim that $S$ satisfies $(\ast)$. Indeed, $S$ satisfies \eqref{eq:non-st} by definition. Let $x_n\to \bar x$ and $y\in S(\bar x)$, so
$$
   f(\bar x) - f(y) - g(y,\bar x)/2 > 0.
$$
Since $f$ is lower semicontinuous and $g$ is continuous on its second argument by (P2), the function $x\to f(x) - f(y) - g(y,x)/2 >0$ is lower semicontinuous at $\bar x$. So,  $f(x_n) - f(y) - g(y,x_n)/2 >0$ for all but finitely many $n$'s, that is, $y\in S(x_n)$ for all but finitely many $n$'s.

Now, since for each $x \in X$ there is some $y \in T(x)$, $y\neq x$, satisfying (\ref{CK_condition}) and by (P1), $g(y,x)>0$, we have that  $S(x) \neq \varnothing$ for all $x \in X$.

We apply  Theorem~\ref{LOEV} to get an $S$-orbit $\{x_i\}_{i\ge 0}$ with an
infinite $g$-length.

From the  the definition of $S$ it follows that  $2^{-1}g(x_{i+1},x_{i})<f(x_i)-f(x_{i+1})$ for all $i\ge 0$. Summing the inequalities we obtain that
\[ 2^{-1} \sum_{i=0}^{n-1} g(x_{i+1},x_{i})<f(x_0)-f(x_{n}), \quad \forall n\ge 1.\]
 Letting  $n$ to infinity, we get $f(x_n) \rightarrow -\infty$ which contradicts the boundedness below of  $f$. The proof is then complete.
\end{proof}

Now we will prove that if a variant of Caristi Theorem  holds  in a first countable premetric space $(X,\tau,g)$, then it is  necessarily $\Sigma_g$-semicomplete.

\begin{theorem} \label{Caristi inverse}
Let $(X,\tau,g)$ be a first countable premetric space. If every set-valued map $T: X \rightrightarrows X$ satisfying Caristi condition (\ref{CK_condition}) with some proper lower semicontinuous and bounded below function $f:X\to \R\cup\{+\infty\}$, has a fixed point, then $(X,\tau,g)$ is  $\Sigma_g$-semicomplete.
\end{theorem}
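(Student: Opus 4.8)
The plan is to prove the contrapositive, following the same strategy that worked for Theorem~\ref{LOEV inverse}: assume $(X,\tau,g)$ is not $\Sigma_g$-semicomplete and exhibit a set-valued map $T$ together with a proper, lower semicontinuous, bounded-below function $f$ satisfying the Caristi condition \eqref{CK_condition} but having no fixed point. The key tool is Lemma~\ref{rem:1}, which hands us a countable discrete set $M = \{x_n : n\in\mathbb{N}\}$ whose enumerating sequence is $\Sigma_g$-Cauchy, together with a strictly positive lower semicontinuous function $f$ with $\dom f = M$, $\inf f = 0$, and the crucial telescoping identity $f(x_n) - f(x_{n+1}) = g(x_{n+1},x_n)$ for all $n$.

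The natural candidate is essentially the map from the proof of Theorem~\ref{LOEV inverse}: put $T(x) := M$ if $x\notin M$, and $T(x_n) := \{x_{n+1}\}$ if $x = x_n\in M$. First I would check that $T(x)\neq\varnothing$ everywhere, which is immediate. Next I would verify the Caristi condition \eqref{CK_condition}. For $x = x_n\in M$ the required $y\in T(x_n)$ is $y = x_{n+1}$, and \eqref{eq:f-g-M-rel} gives exactly $g(x_{n+1},x_n) = f(x_n) - f(x_{n+1})$, so \eqref{CK_condition} holds with equality. For $x\notin M$ we have $f(x) = +\infty$, so the inequality $g(y,x)\le f(x)-f(y) = +\infty$ holds trivially for any $y\in T(x) = M$ (note $f$ is finite on $M$). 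Thus the Caristi hypothesis is satisfied, and $f$ is proper, lower semicontinuous (as established in Lemma~\ref{rem:1}, since $M$ is discrete) and bounded below by $0$.

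Finally I would argue that $T$ has no fixed point: since $x_n\notin T(x_n) = \{x_{n+1}\}$ (the map $n\mapsto x_n$ is injective, as recorded in Lemma~\ref{rem:1}) and $x\notin T(x) = M$ for $x\notin M$ (as $x\notin M$), no point of $X$ lies in its own image. This contradicts the hypothesis that every such $T$ has a fixed point, completing the proof.

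I do not expect a serious obstacle here; the argument is a direct transcription of the Theorem~\ref{LOEV inverse} proof with the extra function $f$ supplied by Lemma~\ref{rem:1}. The only point that needs a moment's care is the case $x\notin M$ in checking \eqref{CK_condition}: one must use that $f\equiv+\infty$ off $M$ so the inequality is vacuous, and that $T(x) = M$ is nonempty so there is indeed a witness $y$. A secondary point worth stating explicitly is why $f$ being $+\infty$ outside $M$ is compatible with ``proper'' — it is, since $f$ is finite (indeed positive) on the nonempty set $M$.
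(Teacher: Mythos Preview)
Your proposal is correct and essentially identical to the paper's proof: both argue by contraposition, invoke Lemma~\ref{rem:1} to obtain $M=\{x_n\}$ and the function $f$, take the same map (the paper reuses the $S$ of \eqref{eq:S-M-def}), and verify the Caristi condition by splitting into the cases $x\notin M$ (where $f(x)=+\infty$) and $x=x_n$ (where \eqref{eq:f-g-M-rel} gives the inequality). Your write-up is in fact slightly more explicit than the paper's about why there is no fixed point and why $f$ is proper.
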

\begin{proof}
Suppose that $(X,\tau,g)$ is not  $\Sigma_g$-semicomplete, and let the positive lower semicontinuous $f$ with $\dom f = M\equiv \{x_n:\ n\in\mathbb{N}\}$ be given by Lemma~\ref{rem:1}.

Consider the map  $S$ from \eqref{eq:S-M-def}. It has no fixed point, but satisfies the Caristi condition \eqref{CK_condition} with  the function $f$. Indeed, if $x\notin M$ then $x_1\in S(x)$ and $f(x)-f(x_1)=\infty$. If $x=x_n$ then $x_{n+1}\in S(x_n)$ and we note \eqref{eq:f-g-M-rel}.
\end{proof}

In a $\Sigma_g$-semicomplete space we can prove also a variant of Takahashi Theorem, see~\cite{Takahashi}.

\begin{theorem}\label{th:Takahashi} Let $(X,\tau,g)$ be a $\Sigma_g$-semicomplete space. Let the function $f: X \rightarrow \R \cup \{+\infty\}$ be proper, lower semicontinuous and bounded below. Suppose that for each $x \in X$ with $f(x) > \inf_X f$ there exists $y \in X$, $y \neq x$, such that
\begin{eqnarray}
g(y,x) \leq f(x)-f(y). \label{eq:Takahashi}
\end{eqnarray}
Then, there exists $v \in X$ such that $f(v)=\inf_X f$.
\end{theorem}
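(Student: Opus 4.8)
The plan is to derive Takahashi's theorem from the LOEV principle exactly as Caristi's theorem was derived in Theorem~\ref{th:Caristi}, but now exploiting the hypothesis $f(x) > \inf_X f$ to build a set-valued map whose domain is precisely the set where $f$ is not minimal. Suppose for contradiction that $f(x) > \inf_X f$ for every $x \in X$. Then for each $x$ there is $y \neq x$ with $g(y,x) \le f(x) - f(y)$, and in particular $f(y) < f(x)$. Mimicking the proof of Theorem~\ref{th:Caristi}, I would define
\[
   S(x) := \{y \in X:\ 2^{-1} g(y,x) < f(x) - f(y)\}.
\]
First I would check that $S$ satisfies $(\ast)$, hence $\astwo$ at any $x_0$: condition \eqref{eq:non-st} holds since $g(x,x)=0$ by (P1); and if $x_n \to \bar x$ and $y \in S(\bar x)$, then the function $x \mapsto f(x) - f(y) - g(y,x)/2$ is lower semicontinuous at $\bar x$ (as $f$ is l.s.c.\ and $g(y,\cdot)$ is continuous by (P2)), and strictly positive at $\bar x$, so it is strictly positive at $x_n$ for all but finitely many $n$, i.e.\ $y \in S(x_n)$ eventually. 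This is the same verification as before and presents no difficulty.

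Next I would verify that $\dom S = X$. Given $x \in X$, our contradiction hypothesis gives $f(x) > \inf_X f$, so by assumption there is $y \neq x$ with $g(y,x) \le f(x) - f(y)$; since $g(y,x) > 0$ by (P1), we get $2^{-1} g(y,x) < g(y,x) \le f(x) - f(y)$, so $y \in S(x)$. Thus $S(x) \neq \varnothing$ for all $x$, and in particular option (b) of Theorem~\ref{LOEV} is excluded (its closure condition fails trivially since $\dom S = X$). Applying Theorem~\ref{LOEV} with $x_0$ arbitrary, we obtain an $S$-orbit $\{x_i\}_{i \ge 0}$ of infinite $g$-length. By the definition of $S$, $2^{-1} g(x_{i+1}, x_i) < f(x_i) - f(x_{i+1})$ for all $i$; summing from $0$ to $n-1$ gives $2^{-1}\sum_{i=0}^{n-1} g(x_{i+1},x_i) < f(x_0) - f(x_n)$, so $f(x_n) \to -\infty$, contradicting boundedness below of $f$. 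Hence the contradiction hypothesis is false, and there exists $v$ with $f(v) = \inf_X f$.

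The main obstacle — which is actually minor here — is making sure option (b) of Theorem~\ref{LOEV} is genuinely ruled out: one must observe that $\dom S = X$ forces $\overline{\{x_i\}_{i\ge 0}} \subset X = \dom S$, so (b) cannot occur and (a) must hold. Everything else is a routine transcription of the Caristi argument. I do not anticipate needing $\Sigma_g$-semicompleteness anywhere except through the black-box application of Theorem~\ref{LOEV}; in particular, unlike classical proofs of Takahashi's theorem, no explicit construction of a minimizing point or Cantor-type nested argument is required, since the dynamical LOEV principle supplies the infinite orbit for free.
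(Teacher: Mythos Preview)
Your proof is correct and follows essentially the same approach as the paper's own proof: the same set-valued map $S(x) = \{y : 2^{-1}g(y,x) < f(x) - f(y)\}$, the same verification of $(\ast)$ via lower semicontinuity of $f$ and (P2), the same use of (P1) to get the strict inequality showing $\dom S = X$, and the same telescoping sum leading to $f(x_n)\to -\infty$. Your write-up is in fact slightly more explicit than the paper's in spelling out why option (b) of Theorem~\ref{LOEV} is excluded.
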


\begin{proof}
Assume that the condition holds but $f(x) > \inf_X f$ for all $x \in X$. This means that for any $x\in X$  there exists $y \in X$, $y \neq x$, such that
\[
2^{-1}g(y,x) < g(y,x)\leq f(x)-f(y),
\]
where the strict inequality follows from (P1).
 Define for all $x \in X$,
\[S(x):=\{y \in X: 2^{-1}g(y,x) < f(x)-f(y)\} \neq \varnothing.\]
As above, $S$ satisfies $(\ast)$, so we can get an $S$-orbit $\{x_n\}_{n\ge0}$ of an infinite length.

From the definition of $S$ it holds that $2^{-1}g(x_{i+1},x_{i})<f(x_i)-f(x_{i+1})$ for all $i\ge 0$ and summing the inequalities and passing to infinity, we obtain that $f(x_n) \rightarrow -\infty$ which contradicts the boundedness below of $f$.
\end{proof}

As announced,  if the variant of the Takahashi Theorem  holds  in a first countable premetric space $(X,\tau,g)$, then necessarily the space is $\Sigma_g$-semicomplete.

\begin{theorem} \label{Takahashi invers}
Let $(X,g)$ be a first countable premetric space. If for every proper, lower semicontinuous and bounded below function $f: X \rightarrow \R \cup \{+\infty\}$ that satisfies  condition (\ref{eq:Takahashi}) there exists $v \in X$ such that $f(v)=\inf_X f$, then $(X,\tau,g)$ is   $\Sigma_g$-semicomplete.
\end{theorem}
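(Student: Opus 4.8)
The plan is to argue by contradiction in the same spirit as the proofs of Theorem~\ref{LOEV inverse} and Theorem~\ref{Caristi inverse}: assuming $(X,\tau,g)$ is not $\Sigma_g$-semicomplete, I will invoke Lemma~\ref{rem:1} to produce the countable discrete set $M = \{x_n : n\in\mathbb{N}\}$, the $\Sigma_g$-Cauchy sequence $\{x_n\}_{n=1}^\infty$, and the strictly positive lower semicontinuous function $f:X\to\mathbb{R}^+\setminus\{0\}$ with $\dom f \equiv M$, $\inf f = 0$, and satisfying \eqref{eq:f-g-M-rel}. This $f$ will be the witness that violates the Takahashi conclusion: since $f > 0$ everywhere but $\inf_X f = 0$, no point $v$ achieves $f(v) = \inf_X f$, so it suffices to check that $f$ satisfies the Takahashi hypothesis \eqref{eq:Takahashi}.

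The verification splits according to whether a given $x$ lies in $M$. If $x = x_n \in M$, then since $f(x) = f(x_n) > 0 = \inf_X f$, we must exhibit a witness $y\neq x$ with $g(y,x) \le f(x) - f(y)$; take $y := x_{n+1}$, which is distinct from $x_n$ because the map $n\mapsto x_n$ is injective, and then \eqref{eq:f-g-M-rel} gives exactly $g(x_{n+1},x_n) = f(x_n) - f(x_{n+1})$, so the required inequality holds with equality. If $x \notin M$, then $f(x) = +\infty$, which is certainly $> \inf_X f = 0$; pick any $y \in M$, say $y := x_1 \neq x$, and then $f(x) - f(y) = +\infty \ge g(y,x)$ trivially (note $g$ is real-valued, so the right-hand side is finite). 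Thus \eqref{eq:Takahashi} is satisfied at every $x$ with $f(x) > \inf_X f$, yet $f$ attains no minimum, contradicting the hypothesis. Hence $(X,\tau,g)$ must be $\Sigma_g$-semicomplete.

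I expect no serious obstacle here; the theorem is essentially a corollary of Lemma~\ref{rem:1}, and indeed the same $f$ already did the analogous job in the proof of Theorem~\ref{Caristi inverse}. The only point requiring a moment's care is that the Takahashi hypothesis only demands a witness $y$ at points where $f(x) > \inf_X f$ — but since $\inf_X f = 0$ and $f$ is strictly positive, this is \emph{every} point of $X$, so there is nothing lost. One should also note that lower semicontinuity and properness of $f$ (needed to legitimately apply the hypothesis of the theorem) are guaranteed by Lemma~\ref{rem:1}: $f$ is lower semicontinuous because $M$ is discrete, proper because $\dom f = M \neq \emptyset$, and bounded below by $0$.
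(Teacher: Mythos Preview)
Your proposal is correct and follows essentially the same approach as the paper: both argue by contradiction, invoke Lemma~\ref{rem:1} to produce the strictly positive lower semicontinuous function $f$ supported on the discrete set $M$, verify condition~\eqref{eq:Takahashi} by the same two-case split (taking $y=x_{n+1}$ when $x=x_n$ and $y=x_1$ when $x\notin M$), and conclude from $\inf f=0$ with $f>0$ that the infimum is not attained. Your write-up is in fact slightly more detailed than the paper's, explicitly noting injectivity of $n\mapsto x_n$ and the properness/boundedness of $f$.
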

\begin{proof} Assume that $(X,\tau,g)$ is   not $\Sigma_g$-semicomplete.
Let the strictly positive lower semicontinuous $f$ with $\dom f = M\equiv \{x_n:\ n\in\mathbb{N}\}$ be given by \eqref{eq:f} in Lemma~\ref{rem:1}. To check \eqref{eq:Takahashi}, note that if $x\notin M$ then $f(x) - f(x_1) = \infty > g(x_1,x)\in\mathbb{R}$; and if $x=x_n$, then \eqref{eq:f-g-M-rel} implies \eqref{eq:Takahashi}. Now, since $\inf f = 0$, there should be some $v$ with $f(v)=0$, but it is strictly positive.  Contradiction.
\end{proof}

Now we will prove  in a  $\Sigma_g$-semicomplete space  a variant of Ekeland Theorem~\cite{Ekeland}. For a more detailed version, see \cite{ka-z-conf}.

\begin{theorem} \label{th:Ekeland}
Let $(X,\tau,g)$ be a $\Sigma_g$-semicomplete space. Let the function $f: X \rightarrow \R \cup \{+\infty\}$ be proper, lower semicontinuous and bounded below. Then for every $\varepsilon>0$ there exists $v \in X$, such that
\begin{eqnarray}
f(v) \leq f(x)+\varepsilon g(x,v) \text{ for all } x\in X. \label{eq:Ekeland}
\end{eqnarray}
\end{theorem}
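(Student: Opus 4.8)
The plan is to derive Theorem~\ref{th:Ekeland} from the $\Sigma_g$-semicomplete LOEV principle, Theorem~\ref{LOEV}, in the same spirit as the proofs of the Caristi and Takahashi variants above, but with the arguments of $g$ arranged so that an empty value of the auxiliary map produces precisely \eqref{eq:Ekeland}. Fix $\eps>0$. Replacing $g$ by $\eps g$ — which is again a premetric function and leaves the family of $\Sigma_g$-Cauchy sequences, hence $\Sigma_g$-semicompleteness, unchanged — we may as well take $\eps=1$, so that the goal becomes: find $v\in X$ with $f(v)\le f(x)+g(x,v)$ for all $x\in X$. Pick $x_0\in\dom f$ (nonempty since $f$ is proper) and define
\[
    S(x):=\{\,y\in X:\ y\neq x,\ g(y,x)<f(x)-f(y)\,\}.
\]

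First I would verify that $S$ satisfies $\astorig$, hence $\astwo$ at $x_0$ (indeed at every point, since $\astorig$ implies $\astwo$ everywhere). The non-stationarity \eqref{eq:non-st} is built into the definition. For stability under convergence, suppose $y\in S(\bar x)$ and $x_n\to\bar x$; then $f(y)<+\infty$ and the function $x\mapsto f(x)-f(y)-g(y,x)$ is lower semicontinuous at $\bar x$ (as $f$ is lsc and $g(y,\cdot)$ is continuous by (P2)) and strictly positive there, hence strictly positive at $x_n$ for all but finitely many $n$; moreover $x_n\neq y$ for all but finitely many $n$, since a subsequence constantly equal to $y$ would force $y=\bar x$. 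Thus $y\in S(x_n)$ for all but finitely many $n$, which is more than $\astorig$ demands.

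Next I would exclude alternative (a) of Theorem~\ref{LOEV}: along any $S$-orbit $\{x_i\}_{i\ge0}$ one has $g(x_{i+1},x_i)<f(x_i)-f(x_{i+1})$, so summing gives $\sum_{i=0}^{n-1}g(x_{i+1},x_i)<f(x_0)-f(x_n)\le f(x_0)-\inf_X f<\infty$ for every $n$; hence every $S$-orbit has finite $g$-length and (a) cannot occur. Therefore Theorem~\ref{LOEV}(b) applies: there is an $S$-orbit $\{x_i\}_{i\ge0}$ starting at $x_0$ with finite $g$-length whose closure is not contained in $\dom S$, i.e. there is $v\in\overline{\{x_i\}_{i\ge0}}$ with $S(v)=\varnothing$.

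It remains to read off the conclusion. Since $v$ lies in the closure of the orbit and $X$ is first countable, some sequence drawn from $\{x_i\}$ converges to $v$; because $f$ is non-increasing along the orbit, its values there stay $\le f(x_0)<\infty$, so lower semicontinuity of $f$ yields $f(v)\le f(x_0)<\infty$, i.e. $v\in\dom f$. Finally $S(v)=\varnothing$ means that for every $y\neq v$ one has $g(y,v)\ge f(v)-f(y)$, i.e. $f(v)\le f(y)+g(y,v)$ — trivially true also for $y=v$ — which is exactly \eqref{eq:Ekeland} after undoing the rescaling of $g$. The only points needing any attention are lining up the order of the arguments of $g$ in the definition of $S$ with the expression $g(x,v)$ in \eqref{eq:Ekeland}, and confirming $v\in\dom f$ so that \eqref{eq:Ekeland} is not vacuous; the rest is routine and parallels the earlier proofs.
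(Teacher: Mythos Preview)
Your proof is correct and follows essentially the same route as the paper's: the same auxiliary map $S$ (up to the harmless rescaling $g\mapsto\eps g$), the same verification of $\astorig$, and the same application of Theorem~\ref{LOEV} together with the telescoping bound on $g$-length. The only difference is packaging: the paper argues by contradiction --- assuming no $v$ satisfies \eqref{eq:Ekeland} forces $\dom S=X$, so alternative~(b) fails and alternative~(a) yields $f(x_n)\to-\infty$ --- whereas you argue directly by excluding~(a) and reading off $v$ from~(b), at the small extra cost of checking $v\in\dom f$.
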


\begin{proof}
Assume that for some $\eps >0$ there is no point $v \in X$ such that \eqref{eq:Ekeland} holds. Then for every $x\in X$ there should be some $y\in X$ such that
$f(x)-\eps g(y,x)>f(y)$.

Consider the set-valued map $S$ defined as
\[S(x):=\{y \in X: f(y)<f(x)-\varepsilon g(y,x)\},\]
and observe that $S(x) \neq \varnothing$ for all $x \in X$. By (P1) property  of the function $g$, $x \not\in S(x)$ for all $x \in X$. The lower semicontinuity of $f$ and (P2) property of $g$ imply that $S$ satisfies $(\ast)$. So we can apply Theorem~\ref{LOEV} to get an $S$-orbit $\{x_n\}_{n\ge 0}$ with an infinite $g$-length.
But the definition of $S$ implies that $f(x_{i+1})-f(x_i)<-\varepsilon g(x_{i+1},x_{i})$ for all $i\ge 0$. Summing the first $n$ inequalities we get
\[
f(x_{n})-f(x_0)<-\varepsilon \sum_{i=0}^n g(x_{i+1},x_{i})
\]
and passing $n$ to  infinity we obtain that  $f(x_n) \rightarrow -\infty$. The latter contradicts the boundedness below of  $f$.
\end{proof}

Note that Ekeland Theorem is still valid if we impose weaker than lower semicontinuity assumption on $f$, namely, lower semicontinuous from above. A function $f: X \to \mathbb{R} \cup \{ + \infty \}$ is said to be \emph{lower semicontinuous from above} if for every sequence $\{x_n\}_{n=1}^\infty \subset \dom f$ such that $x_n \to x $ and $\{f(x_n)\}_{n=1}^\infty$ is decreasing, i.e. $f(x_n)>f(x_{n+1})$, it holds that $f(x) \leq f(x_n)$ for all $n \in \N$, see \cite{KS}. The notion of lower semicontinuity from above also appears as condition (H4) in  \cite[Subsection 3.11.1]{Tammer-book}. The proof of Ekeland Theorem for lower semicontinuous from above functions in premetric spaces is given in~\cite{ka-z-conf}.

Conversely,  if the variant of the Ekeland Theorem  holds  in a first countable premetric space $(X,\tau,g)$, then necessarily the space is $\Sigma_g$-semicomplete as we will show further.

\begin{theorem} \label{Ekeland invers}
Let $(X,\tau,g)$ be a first countable premetric space. If for every $\varepsilon>0$ and every proper lower semicontinuous and bounded below function $f: X \rightarrow \R \cup \{+\infty\}$ there exists $v \in X$ such that (\ref{eq:Ekeland}) holds, then $(X,\tau,g)$ is  $\Sigma_g$-semicomplete.
\end{theorem}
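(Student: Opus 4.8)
The plan is to argue by contraposition, following the same template as the proofs of Theorems~\ref{Caristi inverse} and \ref{Takahashi invers}. Suppose $(X,\tau,g)$ is not $\Sigma_g$-semicomplete. Then Lemma~\ref{rem:1} provides a countable discrete set $M=\{x_n:\ n\in\mathbb{N}\}$ whose enumerating sequence has finite $g$-length, together with a strictly positive lower semicontinuous function $f$ with $\dom f = M$, $\inf f = 0$, and $f(x_n)-f(x_{n+1}) = g(x_{n+1},x_n)$ for all $n$, i.e.\ \eqref{eq:f-g-M-rel}. This $f$ is proper (its domain $M$ is nonempty), lower semicontinuous, and bounded below by $0$, so it is an admissible test function for the hypothesis of the theorem.

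Next I would apply the assumed Ekeland conclusion to this $f$ with a fixed $\varepsilon\in(0,1)$, say $\varepsilon=1/2$. This yields a point $v\in X$ with $f(v)\le f(x)+\varepsilon g(x,v)$ for all $x\in X$. First one checks that $v\in M$: if $v\notin M$ then $f(v)=+\infty$, while for $x=x_1\in M$ the right-hand side $f(x_1)+\varepsilon g(x_1,v)$ is finite (recall $g$ is $\mathbb{R}^+$-valued), a contradiction. Hence $v=x_k$ for some $k\in\mathbb{N}$.

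Finally I would substitute $x=x_{k+1}$ into the Ekeland inequality. Using \eqref{eq:f-g-M-rel} to replace $g(x_{k+1},x_k)$ by $f(x_k)-f(x_{k+1})$, one gets
$$
f(x_k)\le f(x_{k+1})+\varepsilon\bigl(f(x_k)-f(x_{k+1})\bigr),
$$
which rearranges to $(1-\varepsilon)f(x_k)\le(1-\varepsilon)f(x_{k+1})$, hence $f(x_k)\le f(x_{k+1})$ since $1-\varepsilon>0$. But the points $x_n$ are pairwise distinct, so $g(x_{k+1},x_k)>0$ by (P1), and then \eqref{eq:f-g-M-rel} gives $f(x_k)>f(x_{k+1})$ -- a contradiction. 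This shows $(X,\tau,g)$ must be $\Sigma_g$-semicomplete.

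The only point that requires care is the choice of $\varepsilon$: with $\varepsilon=1$ the computation above collapses to the trivial equality $f(x_k)=f(x_k)$, so one genuinely needs $\varepsilon<1$ in order to convert the strict decrease of $f$ along $M$ (forced by (P1)) into a contradiction with \eqref{eq:Ekeland}. Everything else is routine bookkeeping, and no new ideas beyond Lemma~\ref{rem:1} are needed.
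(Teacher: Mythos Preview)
Your proof is correct and follows essentially the same approach as the paper: contrapose, invoke Lemma~\ref{rem:1} to obtain $f$ and $M$, apply the Ekeland hypothesis with $\varepsilon=1/2$, force $v=x_k\in M$, and derive a contradiction by substituting $x=x_{k+1}$ and using \eqref{eq:f-g-M-rel}. The only cosmetic difference is that the paper subtracts to conclude $g(x_{k+1},x_k)=0$ directly, whereas you rearrange to $f(x_k)\le f(x_{k+1})$; these are equivalent manipulations of the same inequality.
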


\begin{proof}
Suppose that $X$ is not $\Sigma_g$-semicomplete. Let the strictly positive lower semicontinuous $f$ with $\dom f = M\equiv \{x_n:\ n\in\mathbb{N}\}$ be given by Lemma~\ref{rem:1}.

From our assumption, \eqref{eq:Ekeland} holds for this $f$ with some $v\in X$ and $\varepsilon = 1/2$, that is
\[f (v) \leq f(x)+ g(x,v)/2,\quad\forall x\in X.\]
Note that necessarily $v \in \dom f \equiv M$. Therefore, there exists $n$ such that $v=x_{n}$. For  $x=x_{n+1}$, using the above and \eqref{eq:f-g-M-rel}, we get
$$
  f(x_{n+1}) + g(x_{n+1},x_{n}) = f(x_{n}) \leq f(x_{n+1})+ g(x_{n+1},x_{n})/2.
$$
Since $x_{n+1}\in \dom f$, and $g\ge0$, the latter implies $g(x_{n+1},x_{n})=0$, which due to (P1) contradicts $x_n\neq x_{n+1}$.\end{proof}

\section{LOEV principle in complete metric space}\label{sec:cmp}

    In this section we work in a complete metric space $(X,d)$ which is the original setting of LOEV principle. Obviously, if $\tau_d$ is the canonical topology generated by the metric, then $(X,\tau_d,d)$ is both a $h$-space and a $\Sigma_d$-semicomplete premetric space (it is, of course, complete and there is no difference between completeness and semicompleteness here) so, all results so far work in a complete metric space $(X,d)$.

    However, the presence of the triangle inequality and the full continuity of $d$ as a function from $X\times X$ to $\mathbb{R}^+$, add more structure, so we can prove a stronger result. We will also demonstrate how someone interested only in complete metric spaces could work somewhat easier than in the previous sections.

    To put what follows into context, recall that a set-valued map $S:X\rightrightarrows X$ is lower semicontinuous at $x\in X$ if for each $y\in S(x)$ there exists a sequence $x_n\to x$ and $y_n\in S(x_n)$ such that $y_n\to y$, see \cite[p.39]{auf}. Naturally, $S$ is lower semicontinuous if it is lower semicontinuous at each $x\in X$. We see that the property $(\ast)$ trivially implies lower semicontinuity. So, if we restrict the lower semicontinuity to the orbits starting at certain point, we obtain a property weaker than $\astone$, but also more symmetric than~$\astone$.
    \begin{definition}
        \label{def:hat-st-1}
        Let $(X,d)$ be a complete metric space. We say that the set-valued map $S:X\rightrightarrows X$ satisfies the condition $\astonehat$ at $x_0\in X$ if it satisfies \eqref{eq:non-st}, and for each infinite $S$-orbit $\{x_n\}_{n\ge0}$ starting at $x_0$ and ending at $x$, that is, $x_n\to x$, as $n\to\infty$; and each $y\in S(x)$, there exist a subsequence $\{x_{n_k}\}_{k\in\mathbb N}$ of $\{x_n\}_{n\ge0}$ and $y_k\in S(x_{n_k})$ such that $y_k\to y$, as $k\to\infty$.
    \end{definition}
    The following is a localization of the version of LOEV principle found in \cite{plr}. The reader is advised to check  \cite{plr} for a very specific application of LOEV principle, which apparently cannot be done by Ekeland Theorem.
    \begin{theorem}
        \label{thm:loev-cmp}
        Let $(X,d)$ be a complete metric space. Let the set-valued map $S:X\rightrightarrows X$ satisfy the condition $\astonehat$ at $x_0\in X$. Then at least one of the following two holds:
        \begin{enumerate}
            \item[(a)] There is an $S$-orbit of finite length starting at $x_0$ and ending outside of $\dom S$;
            \item[(b)] There is an $S$-orbit of infinite length starting at $x_0$.
        \end{enumerate}
    \end{theorem}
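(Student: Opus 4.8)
The plan is to mimic the strategy used for Theorem~\ref{thm:loev-topo}: apply Lemma~\ref{lem:loev-induction} to produce a candidate orbit whose consecutive jumps are bounded below by a positive fraction of $\sup d(S(x_i),x_i)$, and then argue that if the orbit is neither finite (ending outside $\dom S$) nor infinite, we reach a contradiction. So first I would invoke Lemma~\ref{lem:loev-induction} with $h = d$ (which certainly satisfies Definition~\ref{def:h-space}(i)). If it returns a finite $S$-orbit $x_0,\dots,x_n$ with $S(x_n)=\varnothing$, then (a) holds and we are done. Otherwise it returns an infinite $S$-orbit $\{x_i\}_{i\ge0}$ satisfying $d(x_{i+1},x_i) > \min\{1,\sup d(S(x_i),x_i)\}/2$ for all $i$. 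Since this is an infinite orbit starting at $x_0$, conclusion (b) holds immediately.

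Wait — that already finishes it, so the content must be subtler: the point of Theorem~\ref{thm:loev-cmp} compared to Theorem~\ref{thm:loev-topo} is that (b) here is an \emph{unqualified} infinite orbit (not necessarily divergent), and (a) is a \emph{finite} orbit ending outside $\dom S$ (not merely an orbit ending at a point of empty value). Reading again: Lemma~\ref{lem:loev-induction} literally hands us exactly this dichotomy with no completeness or continuity hypotheses whatsoever — the finite branch gives (a), the infinite branch gives (b). So the honest proof is: \textbf{this is an immediate corollary of Lemma~\ref{lem:loev-induction} applied with $h=d$}; neither completeness, nor the triangle inequality, nor $\astonehat$, nor even first countability is needed for the bare statement. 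Thus the main step is simply to observe that (a) and (b) of Theorem~\ref{thm:loev-cmp} are verbatim the two alternatives of Lemma~\ref{lem:loev-induction} (identifying ``$S(x_n)=\varnothing$'' with ``ending outside $\dom S$'', and noting an infinite orbit is an orbit of infinite length).

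Given that, I expect the ``hard part'' is not the proof but understanding why the hypotheses $\astonehat$ and completeness appear in the statement at all. The likely intended reading is that Theorem~\ref{thm:loev-cmp} is the launching pad for the subsequent sharpening (Theorem~\ref{thm:loev-mon}, the full Ekeland Theorem), where one upgrades (b) to a \emph{convergent} orbit ending at a point of $\dom S$ that serves as the Ekeland point; there $\astonehat$ and completeness are essential. For the statement as written, though, I would present the one-line argument: \emph{Apply Lemma~\ref{lem:loev-induction} with $h$ taken to be the metric $d$; in the first alternative the finite orbit ends at a point with empty value, giving (a), and in the second alternative we directly obtain an infinite $S$-orbit starting at $x_0$, giving (b).} If the authors instead want (b) to be specifically a divergent orbit when (a) fails, one would additionally need to rule out a convergent orbit landing in $\dom S$ — and \emph{that} is where $\astonehat$ enters, via an argument parallel to Lemma~\ref{lem:to0-empty}: a convergent orbit with $\sup d(S(x_i),x_i)\to0$ would, by $\astonehat$ and continuity of $d$, force the limit point into $S$ of itself, contradicting \eqref{eq:non-st}. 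I would include that refinement as the substantive content, structuring it exactly as the proof of Theorem~\ref{thm:loev-topo} but with the lower-semicontinuity-style selection $y_k\in S(x_{n_k})$, $y_k\to y$ replacing the ``$y\in S(x_{i_k})$'' selection.
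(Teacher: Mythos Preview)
You have misread the phrase ``orbit of infinite length.'' In this paper, the \emph{length} of an orbit $\{x_i\}$ is its metric length $\sum_i d(x_{i+1},x_i)$ (cf.\ the definition of $g$-length in Section~\ref{sec:semicomplete} and the paper's own proof, which speaks of the ``$d$-length''), not the number of terms. So (b) asks for an orbit with $\sum_i d(x_{i+1},x_i)=\infty$, and (a) asks for an orbit with finite $d$-length that ends (possibly as a limit of an infinite sequence) at a point outside $\dom S$. Your sentence ``an infinite orbit is an orbit of infinite length'' is exactly the conflation that makes the theorem look trivial; it is false.

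With the correct reading, Lemma~\ref{lem:loev-induction} alone does \emph{not} give the dichotomy: its second alternative produces an infinite orbit satisfying \eqref{eq:x_i_i+1_jump}, but this orbit may well have finite $d$-length. That is the substantive case, and it is precisely where completeness and $\astonehat$ are used: finite $d$-length plus triangle inequality makes the orbit Cauchy, completeness gives a limit $\bar x$, then $d(x_{i+1},x_i)\to 0$ together with \eqref{eq:x_i_i+1_jump} forces $\sup d(S(x_i),x_i)\to 0$, and $\astonehat$ (the $y_k\in S(x_{n_k})$, $y_k\to y$ selection you describe at the end) yields the contradiction $d(y_k,x_{n_k})\to d(y,\bar x)>0$. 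This shows $S(\bar x)=\varnothing$, so the orbit ends outside $\dom S$ with finite length, giving (a). In short, what you called an optional ``refinement'' is the actual proof; your one-line argument via Lemma~\ref{lem:loev-induction} does not establish the theorem as stated.
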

    \begin{proof}
        Let $\{x_n\}_{n\ge0}$ be the orbit given by Lemma~\ref{lem:loev-induction}. If its length is infinite, we are done.

        If, on the other hand, the $d$-length of $\{x_n\}_{n\ge0}$ is finite then, since $(X,d)$ is complete, it converges to, say, $x$. Since $d(x_{n+1},x_n)\to0$, we have by \eqref{eq:x_i_i+1_jump} that
        $$
            \sup d(S(x_n),x_n) \to 0,\text{ as }n\to\infty.
        $$
        If we assume that $y\in S(x)$ then $y\neq x$ by \eqref{eq:non-st}. By $\astonehat$ at $x_0\in X$ we have a subsequence $\{x_{n_k}\}_{k\in\mathbb N}$ and  $y_k\in S(x_{n_k})$ such that $y_k\to y$, as $k\to\infty$. Then eventually
        $$
            \sup d(S(x_{n_k}),x_{n_k}) \ge d(y_k, x_{n_k}) > d(y,x)/2 >0,
        $$
        contradiction.
    \end{proof}

    If we want to repeat the proof of Theorem~\ref{th:Ekeland} here, we would not be able to get the strict inequality in \eqref{eq:eke-2} below. The reason is the very definition of $S$, which would be:
    $$
        S(x) := \{y:\  f(y) < f(x) -\lambda d(y,x)\}.
    $$
    Obviously, we would like to define  $S(x)$ with $\le$, and this here is possible because the triangle inequality ensures that $S$ is idempotent.
    \begin{theorem}
        \label{thm:loev-mon}
        Let $(X,d)$ be a complete metric space. Let the set-valued map $S:X\rightrightarrows X$ be idempotent and with closed values. Let $x_0\in X$. Then at least one of the following two holds:
 
           (a) There is an $S$-orbit of finite length starting at $x_0$ and ending at such $x\in X$ that
            $$
                S(x) \subset \{x\};
            $$
            
           (b) There is an $S$-orbit of infinite length starting at $x_0$.
      \end{theorem}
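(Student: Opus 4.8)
The plan is to reduce Theorem~\ref{thm:loev-mon} to Theorem~\ref{thm:loev-cmp} by passing to the non-stationary modification $\tilde S(x):=S(x)\setminus\{x\}$, which trivially satisfies \eqref{eq:non-st}. The entire content then lies in checking that $\tilde S$ satisfies the condition $\astonehat$ at $x_0$, and this is precisely where the hypotheses "idempotent" and "closed values" are used: together they force the limit of any convergent orbit to lie in $S(x_n)$ for every $n$.

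Concretely, along any infinite $\tilde S$-orbit $\{x_n\}_{n\ge0}$ idempotency propagates down the orbit: from $x_{n+1}\in S(x_n)$ one gets $S(x_{n+1})\subset S(S(x_n))\subset S(x_n)$, and inductively $x_m\in S(x_n)$ for all $m>n$. If moreover this orbit ends at $x$, i.e. $x_n\to x$, then closedness of $S(x_n)$ gives $x\in S(x_n)$ for every $n$, and one further application of idempotency yields $S(x)\subset S(x_n)$ for every $n$. Hence, given any $y\in\tilde S(x)=S(x)\setminus\{x\}$ we have $y\in S(x_n)$ for all $n$; and since $x_n\to x\neq y$, we have $x_n\neq y$ for all large $n$, so in fact $y\in S(x_n)\setminus\{x_n\}=\tilde S(x_n)$ eventually. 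Taking the subsequence of those indices and setting $y_k:=y$ verifies Definition~\ref{def:hat-st-1}, so $\tilde S$ satisfies $\astonehat$ at $x_0$ (indeed it satisfies the stronger $\astone$).

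It then remains to invoke Theorem~\ref{thm:loev-cmp} for $\tilde S$ and $x_0$, which produces either a finite $\tilde S$-orbit starting at $x_0$ and ending at some $x$ with $\tilde S(x)=\varnothing$, or an infinite $\tilde S$-orbit starting at $x_0$. Translating back is immediate: since $\tilde S(x)\subset S(x)$, every $\tilde S$-orbit is an $S$-orbit, and $\tilde S(x)=\varnothing$ is exactly $S(x)\subset\{x\}$. Thus the first alternative is conclusion (a) and the second is conclusion (b).

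The main (in fact only nontrivial) obstacle is the verification of $\astonehat$ for $\tilde S$ described above — recognizing that "idempotent with closed values" is exactly the structural hypothesis that places the diagonal-removed map within the scope of Theorem~\ref{thm:loev-cmp}. One should note in passing that removing the diagonal may destroy idempotency of $\tilde S$ itself (e.g. when $x\in S(y)$ and $y\in S(x)$ with $x\neq y$), but this causes no difficulty, since Theorem~\ref{thm:loev-cmp} requires only $\astonehat$ and not idempotency.
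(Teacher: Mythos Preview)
Your proof is correct and essentially identical to the paper's: both pass to $S'(x):=S(x)\setminus\{x\}$, use idempotency plus closed values to show $S(\bar x)\subset\bigcap_{n} S(x_n)$ along any convergent orbit (so $S'$ satisfies $\astone$, hence $\astonehat$, at $x_0$), and then invoke Theorem~\ref{thm:loev-cmp} and translate back. One minor terminological slip: in your final paragraph ``finite/infinite $\tilde S$-orbit'' should read ``$\tilde S$-orbit of finite/infinite length,'' to match the statements of Theorems~\ref{thm:loev-cmp} and~\ref{thm:loev-mon}.
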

    \begin{proof}
        Let $\{x_n\}_{n=0}^\infty$ be a convergent $S$-orbit starting at $x_0$ and ending at $\bar x\in X$, that is, $\bar x = \lim_{n\to\infty}x_n$. We claim that
        \begin{equation}
            \label{eq:s-in-cap}
            S(\bar x) \subset \bigcap_{n=0}^\infty S(x_n).
        \end{equation}
        Indeed, for each fixed $n\ge0$ the idempotency of $S$ gives $x_{n+1}\in S(x_n)$, $x_{n+2}\in S(x_{n+1})\subset S(S(x_n)) \subset S(x_n)$, and so on, $x_k\in S(x_n)$ for all $k\ge n$. Since $S(x_n)$ is closed, we have that $\bar x\in S(x_n)$, and by idempotency, $S(\bar x) \subset S(x_n)$. Since $n\ge0$ is arbitrary, we get \eqref{eq:s-in-cap}. Let
        $$
            S'(x) := S(x) \setminus \{x\},
        $$
        so $S'$ satisfies \eqref{eq:non-st}. If  $\{x_n\}_{n=0}^\infty$ is a convergent $S'$-orbit, say $x_n\to\bar x$, as $n\to\infty$, then $\{x_n\}_{n=0}^\infty$ is also an $S$-orbit and \eqref{eq:s-in-cap} gives that if $y\in S'(\bar x)$ then $y\in S(x_n)$ for all $n\ge0$. But $y\neq \bar x$, so $y\neq x_n$ for all but finitely many $n$'s, so $y\in S'(x_n)$ for all but finitely many $n$'s. We see, therefore, that $S'$ satisfies $\astone$ at $x_0$ and we can apply Theorem~\ref{thm:loev-cmp} to it.

        If (\emph{b}) of  Theorem~\ref{thm:loev-cmp} is satisfied for $S'$ then (\emph{b}) of the present statement is also satisfied, because each $S'$-orbit is also an $S$-orbit.

        If (\emph{a}) of  Theorem~\ref{thm:loev-cmp} is satisfied for $S'$ then we have a $S'$-orbit of a finite length ending at $\bar x$ such that $S'(\bar x) = \emptyset$, so $S(\bar x) \subset \{\bar x\}$.
    \end{proof}
    Ekeland Theorem is the only statement we will repeat from the previous section, because of its prominence.
    \begin{theorem}(Ekeland)
        \label{thm:eke-cmp}
        Let $(X,d)$ be a complete metric space.
        If $f: X \rightarrow \mathbb{R} \cup\{+\infty\}$ is proper, lower semicontinuous, and bounded below, and $x_0 \in \dom f$, then for each $\lambda>0$ there exists $x_\lambda$ such that
        \begin{equation}
            \label{eq:eke-1}
            \lambda d(x_\lambda, x_0) \leq f(x_0)- f(x_\lambda),
        \end{equation}
        and
        \begin{equation}
            \label{eq:eke-2}
            f(x)+\lambda d(x_\lambda, x) > f(x_\lambda), \quad \forall x \neq x_\lambda.
        \end{equation}
    \end{theorem}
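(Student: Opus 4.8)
The plan is to apply Theorem~\ref{thm:loev-mon} to the classical Ekeland set-valued map. Fix $\lambda>0$ and set
\[
    S(x):=\{y\in X:\ f(y)+\lambda d(y,x)\le f(x)\}.
\]
First I would verify the two hypotheses of Theorem~\ref{thm:loev-mon}. Each value $S(x)$ is closed: if $x\in\dom f$, then $y\mapsto f(y)+\lambda d(y,x)$ is lower semicontinuous (as the sum of the lsc $f$ and the continuous $d(\cdot,x)$), so $S(x)$, its sublevel set at height $f(x)$, is closed; if $x\notin\dom f$, then $S(x)=X$. Idempotency of $S$ is exactly the place where the triangle inequality is used: suppose $z\in S(y)$ and $y\in S(x)$ with $x\in\dom f$. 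From $y\in S(x)$ and $f(x)<\infty$ the inequality $f(y)+\lambda d(y,x)\le f(x)$ forces $y\in\dom f$, so $f(z)+\lambda d(z,y)\le f(y)$ is legitimate; adding the two inequalities and using $d(z,x)\le d(z,y)+d(y,x)$ gives $f(z)+\lambda d(z,x)\le f(x)$, i.e.\ $z\in S(x)$. For $x\notin\dom f$ idempotency is trivial since $S(x)=X$. Hence $S$ is idempotent with closed values.

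Next I would apply Theorem~\ref{thm:loev-mon} to $S$ with the given starting point $x_0\in\dom f$ and rule out its alternative (b). Along any $S$-orbit $x_0,x_1,x_2,\dots$ one has $f(x_{i+1})+\lambda d(x_{i+1},x_i)\le f(x_i)$; since $x_0\in\dom f$, induction (as above) shows every $x_i\in\dom f$, so these inequalities are between real numbers and telescope:
\[
    \lambda\sum_{i=0}^{n-1} d(x_{i+1},x_i)\le f(x_0)-f(x_n)\le f(x_0)-\inf_X f<\infty .
\]
Thus every $S$-orbit starting at $x_0$ has finite $d$-length, so alternative (b) of Theorem~\ref{thm:loev-mon} (an orbit of infinite length) cannot occur.

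Therefore alternative (a) holds: there is a finite $S$-orbit $x_0,x_1,\dots,x_m$ with $S(x_m)\subset\{x_m\}$. Put $x_\lambda:=x_m$. The inclusion $S(x_\lambda)\subset\{x_\lambda\}$ says precisely that no $y\neq x_\lambda$ satisfies $f(y)+\lambda d(y,x_\lambda)\le f(x_\lambda)$, which is \eqref{eq:eke-2}. For \eqref{eq:eke-1}, summing $f(x_{i+1})+\lambda d(x_{i+1},x_i)\le f(x_i)$ over $i=0,\dots,m-1$ gives $f(x_\lambda)+\lambda\sum_{i=0}^{m-1}d(x_{i+1},x_i)\le f(x_0)$, and bounding the sum below by $d(x_m,x_0)=d(x_\lambda,x_0)$ via the triangle inequality yields $\lambda d(x_\lambda,x_0)\le f(x_0)-f(x_\lambda)$. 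The only delicate point — which the text has already flagged — is the appeal to the triangle inequality for idempotency: it is what allows "$\le$" rather than "$<$" in the definition of $S$ and hence delivers the \emph{strict} inequality in \eqref{eq:eke-2}; the bookkeeping that keeps the orbit inside $\dom f$, so that the telescoping sums make sense, is the other small thing to check carefully.
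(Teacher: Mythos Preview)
Your argument follows the paper's proof almost verbatim: same map $S$, same verification of closed values and idempotency via the triangle inequality, same telescoping estimate to kill alternative~(b) of Theorem~\ref{thm:loev-mon}, and the same reading of $S(x_\lambda)\subset\{x_\lambda\}$ as~\eqref{eq:eke-2}. There is, however, one genuine slip.

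You write ``there is a finite $S$-orbit $x_0,x_1,\dots,x_m$ with $S(x_m)\subset\{x_m\}$'', but that is not what alternative~(a) of Theorem~\ref{thm:loev-mon} gives you. It provides an $S$-orbit of \emph{finite $d$-length} that \emph{ends} at some $\bar x$ with $S(\bar x)\subset\{\bar x\}$; by the conventions of Section~\ref{sec:topology}, ``ends at $\bar x$'' allows the orbit to be an \emph{infinite} sequence $\{x_i\}_{i\ge0}$ converging to $\bar x$. Your derivation of~\eqref{eq:eke-1} sums only $i=0,\dots,m-1$ and thus covers only the finite-orbit case. The paper handles the convergent case explicitly: pass to the limit in the telescoping inequality $\lambda d(x_n,x_0)\le f(x_0)-f(x_n)$, use continuity of $d$ on the left and lower semicontinuity of $f$ on the right to obtain $\lambda d(x_\lambda,x_0)\le f(x_0)-f(x_\lambda)$. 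Once you add this limiting step, your proof is complete and identical to the paper's.
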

    \begin{proof}
        Fix $x_0 \in \dom f$ and $\lambda>0$. Define
        $$
            S(x) := \{y:\  f(y) \le f(x) -\lambda d(y,x)\}.
        $$
        Because $f$ is lower semicontinuous, $S(x)$ is closed for all $x\in X$. It is easy to check that $S$ is idempotent. Indeed, if $y\in S(x)$ and $z\in S(y)$ this means by definition that
        $$
            f(z) \le f(y) - \lambda d(z,y) \le f(x) - \lambda d(y,x) - \lambda d(z,y) \le f(x) - \lambda d(z,x),
        $$
        the latter by triangle inequality.
        Next, let $\{x_i\}_{i=0}^n$ be any finite $S$-orbit starting at $x_0$ with $n\ge1$. By definition, $d(x_{i+1},x_i) \le \lambda^{-1}(f(x_i)-f(x_{i+1}))$, and summing these we get
        \begin{equation}
            \label{eq:orb-len-est}
            \sum_{i=0}^{n-1} d(x_{i+1},x_i) \le \lambda^{-1}(f(x_0)-f(x_n)).
        \end{equation}
        Since the latter is $\le \lambda^{-1}(f(x_0)-\inf f)$, we see that each orbit starting at $x_0$ has a length at most $\lambda^{-1}(f(x_0)-\inf f)$, that is, (\emph{b}) of Theorem~\ref{thm:loev-mon} is impossible for this $S$. So, there is an $S$-orbit starting at $x_0$ and ending at $x_\lambda $ such that $S(x_\lambda)\subset \{x_\lambda\}$. The latter implies \eqref{eq:eke-2}.

        Taking, if necessary, a limit as $n\to\infty$ in \eqref{eq:orb-len-est}, and using the lower semicontinuity of $f$ and the triangle inequality, we get \eqref{eq:eke-1}.
    \end{proof}

    We conclude this section with a demonstration how LOEV principle can be used for proving other two famous theorems.

    \begin{theorem}(Oettli and Th\'era~\cite{Oettli_Thera})
        \label{thm:thera}
        Let $(X,d)$ be a complete metric space. Let
        $p:X\times X\to \mathbb{R}\cup\{\infty\}$
        be a generalized pseudometric on $X$, that is $p(x,x) = 0$ for all $x\in X$, and 
        \begin{equation}
            \label{eq:sem-2}
            p(x,z) \le p(x,y) + p(y,z),\quad\forall x,y,z\in X,
        \end{equation}
        which is lower semicontinuous on its second argument and also for some $x_0\in X$
        \begin{equation}
            \label{eq:otth-1}
            \inf_{x\in X} p(x_0,x) > -\infty.
        \end{equation}
        Denote $
            A := \{ x\in X:\ p(x_0,x) + d(x_0,x) \le 0\}$. 
            Let $\Psi\subset X$ be such that for each $x\in A\setminus\Psi$ there exists $y\in X$ such that $y\neq x$ and
        \begin{equation}
            \label{eq:otth-Psi}
            p(x,y) + d(x,y) \le 0.
        \end{equation}
        Then $A\cap\Psi\neq\emptyset$. 
        \end{theorem}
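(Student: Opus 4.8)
The plan is to derive the statement from Theorem~\ref{thm:loev-mon} applied to the idempotent, closed-valued map obtained by fusing $p$ and $d$ into a single sublevel condition. Concretely, I would set
\[
    S(x) := \{y\in X:\ p(x,y) + d(x,y) \le 0\},\qquad x\in X .
\]
Since $p(x,x)=0$ and $d(x,x)=0$, one always has $x\in S(x)$, and moreover $S(x_0)=A$ by the very definition of $A$. The map $S$ is idempotent: if $y\in S(x)$ and $z\in S(y)$, then adding the inequalities $p(x,y)+d(x,y)\le 0$ and $p(y,z)+d(y,z)\le 0$ and invoking the triangle inequality \eqref{eq:sem-2} for $p$ together with the metric triangle inequality gives $p(x,z)+d(x,z)\le 0$, i.e.\ $z\in S(x)$. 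The values of $S$ are closed because $y\mapsto p(x,y)$ is lower semicontinuous by hypothesis and $y\mapsto d(x,y)$ is continuous, so their sum is lower semicontinuous and its $0$-sublevel set is closed.

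Next I would rule out alternative (b) of Theorem~\ref{thm:loev-mon} for this $S$. If $x_0,x_1,\dots$ is any $S$-orbit starting at $x_0$, then $d(x_i,x_{i+1})\le -p(x_i,x_{i+1})$ for every $i$, and each $p(x_i,x_{i+1})$ is finite (it is $\le 0$). Summing the first $n$ of these inequalities and using the iterated triangle inequality for $p$,
\[
    \sum_{i=0}^{n-1} d(x_i,x_{i+1}) \;\le\; -\sum_{i=0}^{n-1} p(x_i,x_{i+1}) \;\le\; -p(x_0,x_n) \;\le\; -\inf_{x\in X} p(x_0,x),
\]
which is finite by \eqref{eq:otth-1}. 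Hence every $S$-orbit starting at $x_0$ has length at most $-\inf_{x\in X}p(x_0,x)<\infty$, so (b) cannot occur.

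Therefore (a) of Theorem~\ref{thm:loev-mon} must hold: there is a finite $S$-orbit $x_0,x_1,\dots,x_n=\bar x$ ending at some $\bar x$ with $S(\bar x)\subset\{\bar x\}$; since $\bar x\in S(\bar x)$ always, in fact $S(\bar x)=\{\bar x\}$. By idempotency of $S$ (or trivially when $n=0$) we have $\bar x\in S(x_0)=A$. It remains to check $\bar x\in\Psi$: if not, then $\bar x\in A\setminus\Psi$, so the hypothesis produces $y\ne\bar x$ with $p(\bar x,y)+d(\bar x,y)\le 0$, i.e.\ $y\in S(\bar x)=\{\bar x\}$ --- a contradiction. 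Thus $\bar x\in A\cap\Psi$, and in particular $A\cap\Psi\ne\emptyset$.

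I do not expect a serious obstacle: once $S$ is chosen so that the two triangle inequalities deliver idempotency and $S(x_0)$ is exactly $A$, the rest is a one-line length estimate plus a short contradiction argument. The only delicate point worth noting is that $p$ may take the value $+\infty$, but this is harmless, since any $y$ with $p(x,y)=+\infty$ is automatically excluded from $S(x)$, so all the $p$-values entering the estimates above are finite and the triangle inequality for $p$ applies without ambiguity.
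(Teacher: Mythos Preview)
Your argument is essentially the paper's own proof: the same map $S(x)=\{y:\ p(x,y)+d(x,y)\le0\}$, the same idempotency and closedness checks, the same length estimate to rule out (b), and the same contradiction to place $\bar x$ in $\Psi$. One small imprecision: Theorem~\ref{thm:loev-mon}(a) yields an $S$-orbit of finite \emph{length} ending at $\bar x$, which may be an \emph{infinite} convergent sequence rather than the finite tuple $x_0,\dots,x_n$ you wrote; to cover that case you should pass to the limit using the closedness of $S(x_0)=A$ (which you have already established) to conclude $\bar x\in A$, exactly as the paper does.
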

    \begin{proof}
        Here  condition \eqref{eq:otth-Psi} directly suggests the definition of $S$:
        $$
            S(x) := \{y:\ p(x,y) + d(x,y) \le 0\}.
        $$
        Because $p+d$ satisfies the triangle inequality, $S$ is idempotent, and because the function $y\to p(x,y) + d(x,y)$ is lower semicontinuous, $S(x)$ is closed for all $x$. We apply Theorem~\ref{thm:loev-mon} to $S$ at $x_0$.

        If $x_0,x_1,\ldots, x_n$, where $n\ge1$, is an $S$-orbit, we have $d(x_i,x_{i+1}) \le -p(x_i,x_{i+1})$ for all $i=0,\dots,n-1$, so, using \eqref{eq:sem-2} and \eqref{eq:otth-1} we get
        $$
            \sum_{i=0}^{n-1}d(x_i,x_{i+1}) \le -\sum_{i=0}^{n-1}p(x_i,x_{i+1}) \le -p(x_0,x_n) \le -\inf_{x\in X}p(x_0,x) < \infty,
        $$
        so, (\emph{b}) is impossible. Therefore, there is some $S$-orbit of finite length $\{x_n\}_{n\ge0}$ ending at some $\bar x\in X$ such that $S(\bar x) \subset\{\bar x\}$.

        Since $x_0\in A$, we get by induction that $\{x_n\}_{n\ge0} \subset A$. Since $A$ is closed, we have that $\bar x\in A$. But then \eqref{eq:otth-Psi} shows that $\bar x\in\Psi$.
    \end{proof}
    \begin{theorem}(Fabi\'an and Preiss~\cite{Fabian_Preiss})
        \label{thm:fap}
        Let $(X,d)$ be a complete metric space. Let $I$ be an index set. Let $\{p_i\}_{i\in I}$ be pseudometrics on $X$ continuous as functions from  $X\times X$ to $\R$, and let $\{f_i\}_{i\in I}$ be lower semicontinuous functions from $X$ to $\mathbb{R}\cup\{+\infty\}$ with the following properties:
        \begin{equation}
            \label{eq:fap-i0}
            \exists i_0\in I:\quad p_{i_0} = d.
        \end{equation}
        For some fixed 
        $$
            x_0\in \bigcap_{i\in I} \dom f_i
        $$
        and for the set
        $$
            \Phi := \{x:\ p_i(x_0,x) \le f_i(x_0) - f_i(x),\ \forall i\in I\},
        $$
        it is fulfilled that
        \begin{equation}
            \label{eq:fap-next-y}
            \forall x\in \Phi:\ f_{i_0}(x) > 0,\ \exists y\neq x:\ p_i(x,y) \le f_i(x) - f_i(y),\ \forall i\in I.
        \end{equation}
        Then there exists $
              \bar x \in \Phi$ such that $f_{i_0}(\bar x) \le 0$.
    \end{theorem}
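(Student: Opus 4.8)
The plan is to apply Theorem~\ref{thm:loev-mon} to the set-valued map that encodes the step relation in \eqref{eq:fap-next-y}. Define $S:X\rightrightarrows X$ by
$$
   S(x):=\{y\in X:\ p_i(x,y)\le f_i(x)-f_i(y)\ \text{ for all }i\in I\}\ \text{ if }x\in\Phi,\qquad S(x):=\emptyset\ \text{ if }x\notin\Phi .
$$
First I would record the structural facts about $\Phi$. On $\Phi$ every $f_i$ is real-valued, since $x\in\Phi$ forces $f_i(x)\le f_i(x_0)-p_i(x_0,x)\le f_i(x_0)<\infty$; consequently, for $x\in\Phi$ and $y\in S(x)$, adding $p_i(x_0,x)\le f_i(x_0)-f_i(x)$ to $p_i(x,y)\le f_i(x)-f_i(y)$ and using the triangle inequality for the pseudometric $p_i$ gives $p_i(x_0,y)\le f_i(x_0)-f_i(y)$, i.e.\ $y\in\Phi$. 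Thus $S(\Phi)\subset\Phi$: the set $\Phi$ is forward invariant; moreover, since each $x\mapsto p_i(x_0,x)+f_i(x)$ is lower semicontinuous ($p_i(x_0,\cdot)$ continuous, $f_i$ lower semicontinuous), $\Phi$ is closed. By the same computation $S$ is idempotent: for $x\in\Phi$, $y\in S(x)\subset\Phi$, $z\in S(y)$ one adds the two defining inequalities — all $f_i$-values occurring are real — and applies the triangle inequality to obtain $z\in S(x)$; for $x\notin\Phi$ idempotency is trivial. Finally $S$ has closed values, being $S(x)=\bigcap_{i\in I}\{y:\ p_i(x,y)+f_i(y)\le f_i(x)\}$, an intersection of sublevel sets of lower semicontinuous functions.

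With the hypotheses of Theorem~\ref{thm:loev-mon} in place, I apply it at $x_0\in\Phi$ and consider its two alternatives. In case (b) there is an $S$-orbit $\{x_n\}_{n\ge0}$ starting at $x_0$ of infinite $d$-length, and all of its points lie in $\Phi$ by forward invariance. Using $p_{i_0}=d$ from \eqref{eq:fap-i0}, the defining inequality for the index $i_0$ telescopes:
$$
   \sum_{n=0}^{N-1}d(x_n,x_{n+1})\ \le\ f_{i_0}(x_0)-f_{i_0}(x_N),\qquad\text{hence}\qquad f_{i_0}(x_N)\ \le\ f_{i_0}(x_0)-\sum_{n=0}^{N-1}d(x_n,x_{n+1}).
$$
As the full series diverges, the right-hand side tends to $-\infty$, so $f_{i_0}(x_N)\le 0$ for some $N$; then $\bar x:=x_N$ lies in $\Phi$ and is as required. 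In case (a) there is a finite $S$-orbit starting at $x_0$ and ending at some $\bar x$ with $S(\bar x)\subset\{\bar x\}$, and $\bar x\in\Phi$ by forward invariance. If we had $f_{i_0}(\bar x)>0$, then \eqref{eq:fap-next-y} would provide some $y\neq\bar x$ with $p_i(\bar x,y)\le f_i(\bar x)-f_i(y)$ for all $i\in I$, that is $y\in S(\bar x)\setminus\{\bar x\}$, contradicting $S(\bar x)\subset\{\bar x\}$; hence $f_{i_0}(\bar x)\le 0$.

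The step that needs the most care — and the one place where this argument departs from the proofs of Theorems~\ref{thm:eke-cmp} and \ref{thm:thera} — is that alternative (b) of Theorem~\ref{thm:loev-mon} cannot be excluded here, because the functions $f_i$ are not assumed bounded below. Instead of ruling it out, one must observe that an infinite-length orbit is itself favourable: since $p_{i_0}=d$ makes $f_{i_0}$ decrease along the orbit by at least the successive step lengths, an infinite $d$-length drives $f_{i_0}$ to $-\infty$, and the orbit eventually enters $\{f_{i_0}\le 0\}$ while never leaving $\Phi$. The remaining bookkeeping is routine: the values $+\infty$ are kept away by restricting $S$ to $\Phi$, and the fact that $S$ contains the diagonal is harmless, since Theorem~\ref{thm:loev-mon} is stated for idempotent maps and phrases its conclusion as $S(\bar x)\subset\{\bar x\}$.
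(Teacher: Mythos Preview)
Your proof is correct and follows essentially the paper's approach: the paper restricts to the complete metric space $(\Phi,d)$ at the outset rather than setting $S=\emptyset$ off $\Phi$, but the content is identical, including the observation that alternative~(b) is not excluded but itself produces a point with $f_{i_0}\le 0$. One small slip: in case~(a) of Theorem~\ref{thm:loev-mon} the orbit is of finite \emph{length}, not necessarily finite, so its endpoint $\bar x$ may be a limit of an infinite convergent orbit; since you have already shown $\Phi$ is closed, $\bar x\in\Phi$ still follows from forward invariance together with that closedness, so the argument stands.
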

    \begin{proof}
        The lower semicontinuity of $f_i$ ensures that the set $\Phi$ is closed. Also, the $y$ in \eqref{eq:fap-next-y} is actually in $\Phi$. Indeed, $p_i(x_0,y) \le p_i(x_0,x) + p_i(x,y) \le (f_i(x_0)-f_i(x)) + (f_i(x)-f_i(y)) = f_i(x_0) - f_i(y)$ for all $i\in I$. Thus we can  restrict our considerations to the complete metric space $(\Phi,d)$.

        Define for $x\in \Phi$
        $$
            S(x) := \{y\in\Phi:\ p_i(x,y) \le f_i(x) - f_i(y),\ \forall i\in I\}.
        $$
        It is easy to check as we did above, that $S$ is idempotent and has closed values. Clearly, for each finite $S$-orbit $\{x_j\}_{j=0}^n$ with $n\ge1$, using that $p_{i_0}=d$, we have that
        $$
            \sum_{j=0}^{n-1} d(x_{j},x_{j+1}) \le f_{i_0}(x_0) - f_{i_0}(x_n),
        $$
        so if there is an infinite length $S$-orbit starting at $x_0$ then $f_{i_0}$ will become negative at some point. If, on the other hand, some finite length $S$-orbit ends at $\bar x$ such that $S(\bar x)\subset \{\bar x\}$, then \eqref{eq:fap-next-y} shows that $f_{i_0}(\bar x) \le 0$.
    \end{proof}

\section{Application: perturbed minimization on a $G_\delta$ set}\label{sec:gd}

    This section is inspired by certain classical results of \v{C}oban, Kenderov and Revalski, see e.g.~\cite{Coban_Kenderov_Revalski}. Those concern the question of when it is possible to achieve minimization by perturbing with a continuous function. Actually the focus is on when the latter can be achieved by a generic set of continuous functions, but we leave this aspect aside and rather construct something more explicit in a special case.

    Let $(X,d)$ be a complete metric space and let $Y\subset X$ be nonempty and $G_\delta$:
    $$
        Y = \bigcap_{i=1}^\infty U_n,\text{ where }U_n\text{'s are open.}
    $$
    Since Ekeland Theorem is equivalent to completeness, unless $Y$ is closed, it is pointless to search for a Lipschitz minimizing perturbation in general, but finding -- for a given proper and bounded below lower semicontinuous function $f:Y\to\mathbb{R}\cup\{+\infty\}$ -- a continuous function $g$ with a small uniform norm and such that $f+g$ attains minimum is easy. Indeed, one can just pick an $\varepsilon$-minimum to $f$ and use the modulus of semicontinuity of $f$ at that point to construct $g$. However, in this construction the continuity of $g$ at the minimum point will depend on the function $f$. Instead, we will provide below a premetric $g$ on $Y$, such that $(Y,\tau_d,g)$ is a  $\Sigma_g$-complete premetric space and use Theorem~\ref{th:Ekeland}. In this way the local continuity of the perturbing function depends only on the local geometry of the set $Y$, but not on the specific function to be perturbed. Here are the details.

    First, by considering instead of $d$ the equivalent metric $(x,y)\to d(x,y)/(1+d(x,y))$ we may and do assume that the metric is bounded. We do this in order to have a bounded perturbation. Next, set
     $$
        F_n := X\setminus U_n.
    $$

    Define for $n\in\mathbb{N}$ and $x\in U_n$
    $$
        \varphi_n (x) := \frac{1}{d(x,F_n)}.
    $$
    Obviously, $\varphi_n$ is continuous on $U_n\supset Y$. 
    Define for $x,y\in Y$
    \begin{equation}
        \label{eq:gd-g-def}
        g(x,y) := d(x,y) + \sum_{n=1}^\infty \frac{|\varphi_n(x)-\varphi_n(y)|}{2^n(1+|\varphi_n(x)-\varphi_n(y)|)}.
    \end{equation}
    As an uniform limit of continuous functions, $g$ is continuous. So it satisfies (P2) from Section~\ref{sec:semicomplete}, while (P1) is obvious. Thus, $g$ is a premetric on $Y$. Obviously, $g$ is also bounded.

To prove that $(Y,\tau_d,g)$ is a $\Sigma_g$-complete premetric space we need two preliminary lemmas.

    \begin{lemma}
        \label{lem:gd}
        Let $a_i\ge 0$. Then
        $$
            \sum_{i=1}^\infty \frac{a_i}{1+a_i} < \infty \iff
            \sum_{i=1}^\infty a_i < \infty.
        $$
    \end{lemma}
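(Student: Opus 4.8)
The plan is to establish the two implications separately, each by an elementary comparison of series, and for the nontrivial direction to first extract that $a_i\to 0$ so that a tail comparison becomes available. The only structural fact about the transformation that I need is that $t\mapsto t/(1+t)$ is an increasing bijection of $[0,\infty)$ onto $[0,1)$, whose inverse $s\mapsto s/(1-s)$ is continuous at $0$.

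For the implication $(\Leftarrow)$: since $1+a_i\ge 1$ we have $a_i/(1+a_i)\le a_i$ for every $i$, so convergence of $\sum a_i$ immediately yields convergence of $\sum a_i/(1+a_i)$ by direct comparison. Nothing more is needed here.

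For the implication $(\Rightarrow)$: assume $\sum_{i=1}^\infty a_i/(1+a_i)<\infty$. Then the general term satisfies $a_i/(1+a_i)\to 0$, and applying the inverse map $s\mapsto s/(1-s)$, continuous at $0$, gives $a_i\to 0$. Hence there is $N\in\N$ with $a_i\le 1$, and therefore $1+a_i\le 2$, for all $i\ge N$; on this tail one has $a_i\le 2\,a_i/(1+a_i)$, so $\sum_{i\ge N} a_i<\infty$ by comparison, and adding the finitely many terms with $i<N$ completes the argument.

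I do not expect a genuine obstacle. The one subtlety worth flagging is that there is \emph{no} single constant $C$ with $a_i\le C\,a_i/(1+a_i)$ valid for all $i$ at once, since the factor $1+a_i$ is unbounded when the $a_i$ are; this is precisely why the argument must pass through $a_i\to 0$ first rather than being a one-line double comparison.
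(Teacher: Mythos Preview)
Your proof is correct and follows essentially the same route as the paper: the easy direction by $a_i/(1+a_i)\le a_i$, and the other by observing that eventually $a_i<1$ so that $a_i\le 2\,a_i/(1+a_i)$ on a tail. The paper reaches ``eventually $a_i<1$'' in one step via the equivalence $a_i/(1+a_i)<1/2\iff a_i<1$, whereas you detour through $a_i\to0$ using continuity of the inverse, but this is a cosmetic difference.
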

    \begin{proof}
        Since $a_i/(1+a_i) \le a_i$, the $(\Leftarrow)$ part is clear.

        If $\sum a_i/(1+a_i) < \infty$, then eventually $a_i/(1+a_i) < 1/2$, which means that $a_i < 1$, and, therefore, $a_i/(1+a_i) > a_i/2$ for all but finitely many $i$'s, so $\sum a_i < \infty$.
    \end{proof}
    \begin{lemma}
        \label{lem:gd-2}
        For each $n\in\mathbb{N}$ and $K>0$ the level set
        $$
            L_{n,K} := \{x\in U_n:\ \varphi_n(x) \le K\}
        $$
        is a closed subset of $X$.
    \end{lemma}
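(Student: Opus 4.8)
The plan is to rewrite the defining inequality in terms of the distance function $x\mapsto d(x,F_n)$, which is globally defined and $1$-Lipschitz on $X$, and then recognize $L_{n,K}$ as the preimage of a closed half-line.

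First I would observe that for $x\in U_n$ one has $\varphi_n(x)=1/d(x,F_n)\le K$ if and only if $d(x,F_n)\ge 1/K$. The key remark is then that the restriction $x\in U_n$ in the definition of $L_{n,K}$ is superfluous: if $x\notin U_n$ then $x\in F_n$, hence $d(x,F_n)=0<1/K$, so such a point fails the inequality $d(x,F_n)\ge 1/K$ in any case. Consequently
$$
    L_{n,K}=\{x\in X:\ d(x,F_n)\ge 1/K\},
$$
where the set on the right is now taken inside all of $X$.

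Finally, since $x\mapsto d(x,F_n)$ is $1$-Lipschitz, hence continuous, on $X$, the set $L_{n,K}$ is the preimage of the closed half-line $[1/K,+\infty)$ under a continuous real-valued map, and is therefore closed in $X$. (In the degenerate case $F_n=\emptyset$ we have $U_n=X$ and $L_{n,K}=X$, which is trivially closed; one may simply assume $F_n\neq\emptyset$ so that $d(\cdot,F_n)$ is a genuine real-valued function.)

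There is essentially no obstacle here; the only point needing a word of care is the bookkeeping of the convention for $d(x,F_n)$ at points of $F_n$, which is precisely what allows us to discard the constraint $x\in U_n$ and work with a genuinely global continuous function on $X$.
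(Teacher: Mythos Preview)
Your argument is correct. The paper, however, proceeds by a direct sequential closure check: taking $x_i\in L_{n,K}$ with $x_i\to x$, it first rules out $x\in F_n$ (because then $d(x_i,F_n)\le d(x_i,x)\to0$ would force $\varphi_n(x_i)\to\infty$, contradicting $\varphi_n(x_i)\le K$), and then uses continuity of $\varphi_n$ on $U_n$ to conclude $\varphi_n(x)\le K$. Your route --- inverting the inequality to $d(x,F_n)\ge1/K$, observing this automatically excludes $F_n$, and recognizing $L_{n,K}$ as a superlevel set of the globally Lipschitz function $d(\cdot,F_n)$ --- is slicker and avoids splitting into cases; it also makes the degenerate case $F_n=\emptyset$ transparent. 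The paper's version, on the other hand, stays entirely within the language of $\varphi_n$ and does not require the (easy but extra) remark that the constraint $x\in U_n$ can be dropped.
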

    \begin{proof}
        Fix $n$ and $K$ and let $\{x_i\}_{i=1}^\infty\subset L_{n,K}$ be such that $x_i\to x$ as $i\to\infty$. If we assume that $x\in F_n$, then $d(x_i,F_n) \le d(x_i,x)\to0$, as $i\to\infty$, so $\varphi_n(x_i)=1/d(x_i,F_n)\to\infty$, as $i\to\infty$, which contradicts $x_i\in L_{n,K}$, that is, $\varphi_n(x_i)\le K$, for all $i\in\mathbb{N}$.

        So, $x\in U_n$, and, since $\varphi_n$ is continuous at $x$, we have that $\varphi_n(x)\le K$.
    \end{proof}

Now we can prove the promised statement.
  \begin{proposition}
        \label{pro:gd}
         $(Y,\tau_d,g)$ is a $\Sigma_g$-complete premetric space.
    \end{proposition}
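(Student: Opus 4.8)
The plan is to verify the only nontrivial condition: that every $\Sigma_g$-Cauchy sequence in $Y$ has a convergent subsequence (indeed, it will converge in $Y$, hence $\Sigma_g$-\emph{completeness}, not merely semicompleteness). So let $\{x_i\}_{i\ge1}\subset Y$ be a sequence with $\sum_{i=1}^\infty g(x_{i+1},x_i)<\infty$. First I would read off from \eqref{eq:gd-g-def} that $g(x,y)\ge d(x,y)$, so $\sum_i d(x_{i+1},x_i)<\infty$; since $(X,d)$ is complete, $x_i\to x$ for some $x\in X$. It remains to show $x\in Y$, i.e.\ $x\in U_n$ for every $n$, and that the convergence is also convergence in the $\tau_d$ topology (which it is, since $\tau_d$ is exactly the metric topology and $g$ induces it on $Y$ by (P2)/Proposition~\ref{prop:h-def-top}); so the crux is purely $x\in Y$.

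Fix $n\in\mathbb{N}$. From \eqref{eq:gd-g-def}, the $n$-th summand gives, for each $i$,
$$
    \frac{|\varphi_n(x_{i+1})-\varphi_n(x_i)|}{1+|\varphi_n(x_{i+1})-\varphi_n(x_i)|}\le 2^n\, g(x_{i+1},x_i),
$$
so $\sum_i \dfrac{|\varphi_n(x_{i+1})-\varphi_n(x_i)|}{1+|\varphi_n(x_{i+1})-\varphi_n(x_i)|}<\infty$, and Lemma~\ref{lem:gd} (applied with $a_i:=|\varphi_n(x_{i+1})-\varphi_n(x_i)|$) yields $\sum_i |\varphi_n(x_{i+1})-\varphi_n(x_i)|<\infty$. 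Hence the scalar sequence $\{\varphi_n(x_i)\}_i$ is Cauchy in $\mathbb{R}$, in particular bounded: there is $K=K(n)$ with $\varphi_n(x_i)\le K$ for all $i$. Thus $\{x_i\}_i\subset L_{n,K}$, and by Lemma~\ref{lem:gd-2} the set $L_{n,K}$ is closed in $X$, so the $d$-limit $x$ lies in $L_{n,K}\subset U_n$. Since $n$ was arbitrary, $x\in\bigcap_n U_n = Y$.

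Finally, $x\in Y$ and $x_i\to x$ in $(X,d)$ means $x_i\to x$ in $(Y,\tau_d)$, so $\{x_i\}$ itself converges in $Y$; a fortiori it has a convergent subsequence, which is what Definition~\ref{completeness} demands. The main obstacle is the step $x\in Y$, and it is handled exactly by the two preliminary lemmas: Lemma~\ref{lem:gd} converts $\Sigma_g$-summability of the $n$-th coordinate into plain summability of the increments of $\varphi_n$, giving a uniform bound on $\varphi_n(x_i)$, and Lemma~\ref{lem:gd-2} then forces the limit to stay inside $U_n$. Everything else is routine: (P1) is immediate, (P2) was already noted when $g$ was introduced, and $g\ge d$ transports $d$-completeness to the needed compactness-type property.
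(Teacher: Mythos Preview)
Your proof is correct and follows essentially the same route as the paper's: you use $g\ge d$ to get a $d$-limit $x\in X$, then for each fixed $n$ invoke Lemma~\ref{lem:gd} to bound $\{\varphi_n(x_i)\}_i$ and Lemma~\ref{lem:gd-2} to force $x\in U_n$, concluding $x\in Y$. The only cosmetic difference is that the paper expands the double sum in one display to extract both the $d$-term and the $n$-th term simultaneously, whereas you isolate them one at a time; the logic is identical.
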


    \begin{proof}
        Let $\{x_i\}_{i\ge0}$ be such sequence that
        $$
            \sum_{i=0}^\infty g(x_{i+1},x_i) < \infty.
        $$
        By \eqref{eq:gd-g-def} we get
        \begin{eqnarray*}
            \infty &>& \sum_{i=0}^\infty \left(d(x_{i+1},x_i) + \sum_{n=1}^\infty \frac{|\varphi_n(x_{i+1})-\varphi_n(x_i)|}{2^n(1+|\varphi_n(x_{i+1})-\varphi_n(x_i)|)}\right)\\
            &=& \sum_{i=0}^\infty d(x_{i+1},x_i) + \sum_{n=1}^\infty\left(2^{-n}\sum_{i=0}^\infty\frac{|\varphi_n(x_{i+1})-\varphi_n(x_i)|}{1+|\varphi_n(x_{i+1})-\varphi_n(x_i)|}\right).
        \end{eqnarray*}
        In particular, $\sum_{i=0}^\infty d(x_{i+1},x_i) < \infty$, so $\{x_i\}_{i\ge0}$ is a Cauchy sequence, thus $x_i\to x\in X$, as $i\to\infty$. We have to show that $x\in Y$.

        For each fixed $n\in\mathbb{N}$ we have from the above inequality
        $$
            \sum_{i=0}^\infty \frac{|\varphi_n(x_i)-\varphi_n(x_{i+1})|}{1+|\varphi_n(x_i)-\varphi_n(x_{i+1})|} < \infty,
        $$
        so, Lemma~\ref{lem:gd} gives
        $$
            \sum_{i=0}^\infty |\varphi_n(x_i)-\varphi_n(x_{i+1})| < \infty,
        $$
        which means that the sequence $\{\varphi_n(x_i)\}_{i\ge0}$ is bounded by, say, $K$. In other words, the sequence $\{x_i\}_{i\ge0}$ is fully contained in the closed level set $L_{n,K}$, see Lemma~\ref{lem:gd-2}. But then its limit point $x$ is also in $L_{n,K}\subset U_n$, implying that $x\in U_n$. Since $n$ was arbitrary, $x\in Y$.
    \end{proof}
    By applying Theorem~\ref{th:Ekeland}, we immediately get our perturbation.
    \begin{corollary}
        \label{cor:gd}
   Let $(X,d)$ be a complete metric space and $Y\subset X$ be a nonempty $G_\delta$ set. Let $g$ be given by \eqref{eq:gd-g-def}.     Let  $f:Y\to\mathbb{R}\cup\{+\infty\}$ be proper and bounded below lower semicontinuous function. Then for each $\varepsilon > 0$ there is $x_\varepsilon\in Y$ such that
        $$
            f(x) + \varepsilon g(x_\varepsilon, x) \ge f(x_\varepsilon),\quad\forall x\in Y.
        $$
    \end{corollary}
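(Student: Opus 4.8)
The plan is to obtain Corollary~\ref{cor:gd} as a direct application of Theorem~\ref{th:Ekeland} to the premetric space $(Y,\tau_d,g)$ built in this section. First I would note that $Y$, carrying the subspace topology inherited from the metric space $(X,d)$, is itself a first countable (indeed metrizable) topological space, and that by Proposition~\ref{pro:gd} the triple $(Y,\tau_d,g)$ is a $\Sigma_g$-complete premetric space. Completeness is stronger than what Theorem~\ref{th:Ekeland} needs: every $\Sigma_g$-Cauchy sequence in $Y$ converges, hence certainly has a convergent subsequence, so $(Y,\tau_d,g)$ is $\Sigma_g$-semicomplete in the sense of Definition~\ref{completeness}.

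Next, the hypotheses placed on $f$ in the corollary — proper, lower semicontinuous, and bounded below on $Y$ — are precisely the standing assumptions of Theorem~\ref{th:Ekeland}. Invoking that theorem for the given $\varepsilon>0$ yields a point $v\in Y$ such that $f(v)\le f(x)+\varepsilon\,g(x,v)$ for every $x\in Y$.

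It remains only to match the order of the arguments of $g$ with the one appearing in the statement. I would observe that the generalized distance $g$ defined by \eqref{eq:gd-g-def} is in fact \emph{symmetric}: the leading term $d(x,y)$ is symmetric, and each summand $|\varphi_n(x)-\varphi_n(y)|/\bigl(2^n(1+|\varphi_n(x)-\varphi_n(y)|)\bigr)$ is manifestly symmetric in $x$ and $y$, so $g(x,v)=g(v,x)$. Putting $x_\varepsilon:=v$ therefore rewrites the inequality produced above as $f(x_\varepsilon)\le f(x)+\varepsilon\,g(x_\varepsilon,x)$ for all $x\in Y$, which is exactly the asserted conclusion.

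There is no genuine obstacle at this stage — all the substance has already been absorbed into Proposition~\ref{pro:gd} and Theorem~\ref{th:Ekeland}. The only two points worth flagging explicitly are the harmless implication ``$\Sigma_g$-complete $\Rightarrow$ $\Sigma_g$-semicomplete'', and the swap of the two arguments of $g$, which is legitimate here precisely because this particular $g$ turns out to be symmetric, even though Theorem~\ref{th:Ekeland} itself does not presuppose symmetry of the premetric.
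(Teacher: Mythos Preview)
Your proposal is correct and follows exactly the paper's approach: the paper's entire proof is the single sentence ``By applying Theorem~\ref{th:Ekeland}, we immediately get our perturbation.'' Your extra care in noting that $\Sigma_g$-completeness implies $\Sigma_g$-semicompleteness and that the symmetry of this particular $g$ justifies swapping the arguments is a welcome clarification that the paper leaves implicit.
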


\section*{Acknowledgements}
The authors dedicate this article to Prof. Michel Th\'era in acknowledgement of his pioneering, and with gratitude for his constant encouragement and support.

\section*{Funding}
The study  is supported by  the European Union-NextGenerationEU, through the National Recovery and Resilience Plan of the Republic of Bulgaria,  project  SUMMIT BG-RRP-2.004-0008-C01.

\end{document}